\documentclass{article}
\usepackage{enumitem}


\usepackage{lineno}

\usepackage{amsfonts,amsmath,amsthm,amscd,amssymb,latexsym,amsbsy,pb-diagram,cite,caption,url}

\usepackage{tikz}
\usetikzlibrary{positioning, calc, trees, decorations.markings, patterns, arrows, arrows.meta}
\usetikzlibrary{trees}

\usepackage{diagmac2}
\usepackage{float}
\usepackage{circledtext}

\textheight20cm \textwidth12cm

\newtheorem{theorem}{Theorem}[section]
\newtheorem{lemma}[theorem]{Lemma}
\newtheorem{corollary}[theorem]{Corollary}
\newtheorem{proposition}[theorem]{Proposition}

\newtheorem{conjecture}[theorem]{Conjecture}

\theoremstyle{definition}
\newtheorem{definition}[theorem]{Definition}

\theoremstyle{remark}
\newtheorem{example}[theorem]{Example}
\newtheorem{remark}[theorem]{Remark}

\newcommand{\keywords}{\textbf{Key words. }\medskip}
\newcommand{\subjclass}{\textbf{MSC 2020. }\medskip}
\renewcommand{\abstract}{\textbf{Abstract. }\medskip}

\numberwithin{equation}{section}

\begin{document}

\author{Oleksiy Dovgoshey, Omer Cantor, Olga Rovenska}

\title{Compact ultrametric spaces generated by labeled star graphs}

\maketitle

\begin{abstract}
 Let ${\bf US}$ be the class of all ultrametric spaces generated by labeled star graphs. We prove that compact ${\bf US}$-spaces are the completions of totally bounded ultrametric spaces generated by decreasingly labeled rays.  
We characterize the ultrametric spaces which are weakly similar to finite ${\bf US}$-spaces and describe these spaces by certain four-point conditions.  

\end{abstract}

\subjclass{Primary 54E35, Secondary 54E4.}

\keywords{Compact ultrametric space, four-point condition, labeled star graph,  weak similarity.}

\section{Introduction}

\hspace{4 mm} As is known, any finite ultrametric space can be described up to isometry by a Gurnvich–Vyalyi representing tree \cite{Gurvich-Vyalyi2012}. 
This representation and its geometric interpretation \cite{Petrov-Dovgoshey2014} allow us to find solutions of different extremal problems related to such spaces \cite{Dovgoshey-Petrov2020, Dovgoshey-Petrov-Teichert2015, Dovgoshey-Petrov-Teichert2017}. 
Recently, analogues of the Gurnvich–Vyalyi  representation  and its geometric interpretation  were also found for totally bounded ultrametric spaces in \cite{Dov}.

The above mentioned representing  trees form a special class of  trees endowed with vertex labelings. Some important properties of infinite trees endowed with positive edge labelings have been described in \cite{BD2006CPC, BDS2005JGT, BS2010CPC, DIESTEL2006846, DIESTEL20111423, Deistel2017, DK2004EJC, DP2017JCT, DS2011AM, DS2011TIA, DS2012DM, DMV2013AC, DP2013SM}.

The ultrametric spaces generated by arbitrary nonnegative vertex labelings on both finite and infinite trees   were first considered in \cite{Dov2020TaAoG} and studied in \cite{Dovgoshey-Küçükaslan2022,Dovgoshey-Kostikov-2023}. The simplest types of infinite trees are rays and star graphs. The totally bounded ultrametric spaces generated by labeled  almost rays have  been characterized in \cite{Dovgoshey-Vito}. 
Furthermore, paper \cite{Dov-Rov} contains a purely metric characterization of ultrametric spaces generated by labeled star graphs.

The main goal of this paper is to give a metric description of compact ultrametric spaces generated by labeled star graphs.

The paper is organized as follows. In Section~2 we collect together some definitions and facts related to ultrametric spaces and trees.  

Section~3 contains the formulations of two problems that initially motivate our study.  

The finite ultrametric spaces generated by labeled stars are considered in Section~4.  
 Theorem~\ref{eeerlm} shows that Conjecture 4.1 of \cite{Dov-Rov} is true.
A semimetric modification of Theorem~\ref{eeerlm} is given in Theorem~\ref{333sss}.  

Theorem~\ref{qyuiz}, the first result of Section~5, describes up to isomorphism all labeled star graphs that generate compact ultrametric space.  
Compact ultrametric spaces generated by labeled star graphs are described up to isometry in Theorem~\ref{siti}.  
In Theorem~\ref{wop} we prove that Conjecture~4.2 of paper \cite{Dov-Rov} is true.  

Our last result, Theorem~\ref{sofas}, shows that the completions of totally bounded ultrametric spaces generated by decreasingly labeled rays coincide with compact ultrametric spaces generated by labeled star graphs.

The final Section~6 contains two new conjectures.

\section{Basic definitions and preliminary results}

Let us start from the fundamental concept of semimetric space introduced by M.~Fréchet in \cite{Fréchet}.

\begin{definition}\label{wdvthk}
Let $\mathbb{R}^+ = [0, \infty)$. 
A {\it semimetric} on a non-empty set $X$ is a symmetric function
$
d \colon X \times X \to \mathbb{R}^+
$
such that $d(x, y) = 0$ if and only if $x = y$.  A semimetric $d$ is called an \textit{ultrametric} if the {\it strong triangle inequality}
\begin{equation*}
    d(x, y) \leq \max\{d(x, z),\,d(z,y)\}
\end{equation*}
      holds for all $x, y, z \in X$.
\end{definition}

The object of our study is a certain family of compact, and in particular finite, ultrametric spaces.
A standard definition of compactness is usually formulated as: A subset  $ S$ of an ultrametric space   is compact if each open cover of $ S $  has a finite subcover.

There exists a simple interdependence between the compactness of a set and the so-called convergent sequences. Let us remember that a sequence $(x_n)_{n \in \mathbb{N}}$ of points in an ultrametric space $(X, d)$ is said to converge to a point $a \in X$ if
\begin{equation}
\label{896}
    \lim_{n \to \infty} d(x_n, a) = 0.
\end{equation}

When \eqref{896} holds, we write  
\begin{equation*}
\lim_{n \to \infty} x_n = a.
\end{equation*}

Recall that a point $a \in X$ is a \textit{limit point} (or equivalently, an \textit{accumulation point}) of a set $A \subseteq X$ if there is a sequence $(a_n)_{n \in \mathbb{N}}$ of different points of $A$ such that  
\begin{equation*}
\lim_{n \to \infty} a_n = a.
\end{equation*}

\begin{proposition}\label{graa}
A subset $A$ of an ultrametric space is compact if and only if every infinite sequence of points of $A$ contains a subsequence which converges to a point of $A$.
\end{proposition}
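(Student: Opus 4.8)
The plan is to prove the two implications separately, using only elementary manipulations of convergent sequences and open covers; both directions go by contradiction.

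For the ``only if'' part, assume $A$ is compact and let $(x_n)_{n\in\mathbb{N}}$ be a sequence of points of $A$, and suppose that no subsequence of it converges to a point of $A$. The key observation is that then, for every $a\in A$, there is an open ball $B_a$ centered at $a$ containing $x_n$ for only finitely many indices $n$: otherwise every open ball centered at $a$ would contain $x_n$ for infinitely many $n$, and one could inductively choose indices $n_1<n_2<\cdots$ with $d(x_{n_k},a)<1/k$, yielding a subsequence converging to $a\in A$. The family $\{B_a:a\in A\}$ is then an open cover of $A$, so by compactness it has a finite subcover $B_{a_1},\dots,B_{a_m}$. But then $\mathbb{N}=\bigcup_{j=1}^{m}\{n\in\mathbb{N}:x_n\in B_{a_j}\}$ would be finite, a contradiction.

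For the ``if'' part, assume every sequence of points of $A$ has a subsequence converging to a point of $A$. First I would show $A$ is totally bounded: if some $\varepsilon>0$ admitted no finite cover of $A$ by open $\varepsilon$-balls, one could pick points $x_1,x_2,\dots$ in $A$ with $x_{n+1}\notin\bigcup_{j\le n}B_\varepsilon(x_j)$, hence $d(x_n,x_m)\ge\varepsilon$ for all $n\neq m$, and such a sequence has no convergent (in particular, no Cauchy) subsequence. Next, given an arbitrary open cover $\mathcal{U}$ of $A$, I would establish a Lebesgue-type number: there is $\delta>0$ such that each ball $B_\delta(x)$ with $x\in A$ lies inside some member of $\mathcal{U}$. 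Indeed, otherwise for every $n$ there is $x_n\in A$ with $B_{1/n}(x_n)$ contained in no member of $\mathcal{U}$; passing to a subsequence $x_{n_k}\to a\in A$ and choosing $U\in\mathcal{U}$ and $r>0$ with $B_r(a)\subseteq U$, we get $B_{1/n_k}(x_{n_k})\subseteq B_r(a)\subseteq U$ as soon as $d(x_{n_k},a)<r$ and $1/n_k\le r$, a contradiction. Finally, cover $A$ by finitely many $\delta$-balls using total boundedness; each lies in a member of $\mathcal{U}$, and these finitely many members form the required finite subcover.

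The main obstacle is this ``if'' direction, and within it the Lebesgue-number step, which is the only point where the argument is not a one-line construction of an explicit subsequence. The ultrametric hypothesis lightens the bookkeeping rather than changing the strategy: the $\varepsilon$-balls are equivalence classes, so total boundedness just means finitely many $\varepsilon$-classes for each $\varepsilon$, and the inclusion $B_{1/n_k}(x_{n_k})\subseteq B_r(a)$ is immediate from the strong triangle inequality; but the overall skeleton is the classical equivalence between compactness and sequential compactness in metric spaces.
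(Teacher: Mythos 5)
Your proof is correct. The paper does not prove this proposition at all---it simply cites Theorem 12.1.3 of \'O Searc\'oid's \emph{Metric Spaces}---and your argument is precisely the standard textbook proof of the equivalence of compactness and sequential compactness in metric spaces: the contrapositive open-cover argument for the forward direction, and total boundedness plus a Lebesgue number for the converse. Both directions are carried out without gaps; in particular, you correctly handle the one point that needs care in the converse, namely that the finitely many $\delta$-balls produced by total boundedness can be taken with centers in $A$ (your construction of the $x_n$ in the total-boundedness step already guarantees this, and the ultrametric identity that every point of a ball is a center would cover it in any case), so each such ball genuinely lies inside a member of the cover.
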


For the proof of Proposition~\ref{graa}  see, for example,  Theorem 12.1.3 in \cite{Sear}.

Let $(X,d)$ be a semimetric space. An \textit{open ball} with a \textit{radius} $r > 0$ and a \textit{center} $c \in X$ is the set  
\begin{equation*}
B_r(c) = \{ x \in X : d(c,x) < r \}.
\end{equation*}

A subset $O$ of an ultrametric space $(X, d)$ is called {\it open} if for every point $p \in O$ there is an open ball $B$ such that  
\begin{equation*}  
p \in B \subseteq O.  
\end{equation*}

\begin{proposition}\label{5667dvf}
Let $(X,d)$ be a compact ultrametric space.  
Then, for every open subset $O$ of $X$, the set $X \setminus O$ is also compact.  
\end{proposition}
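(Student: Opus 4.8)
The plan is to reduce the statement to the sequential characterization of compactness supplied by Proposition~\ref{graa}, applied both to $X$ and to $A := X \setminus O$. First I would fix an arbitrary infinite sequence $(x_n)_{n \in \mathbb{N}}$ of points of $A$. Since $(x_n)_{n \in \mathbb{N}}$ is in particular a sequence in the compact space $X$, Proposition~\ref{graa} gives a subsequence $(x_{n_k})_{k \in \mathbb{N}}$ that converges to some point $a \in X$. To conclude via Proposition~\ref{graa} that $A$ is compact, it suffices to show that this limit point $a$ actually lies in $A$.

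The key step is therefore to verify $a \notin O$. I would argue by contradiction: suppose $a \in O$. Since $O$ is open, by definition there is an open ball $B$ with $a \in B \subseteq O$; write $B = B_r(a)$ for some radius $r > 0$ (one may take the center to be $a$ itself, or simply keep $B$ and use that $a \in B$ together with the ball being open, but centering at $a$ is cleanest). Because $x_{n_k} \to a$, we have $\lim_{k \to \infty} d(x_{n_k}, a) = 0$, so there exists an index $k$ with $d(x_{n_k}, a) < r$, i.e. $x_{n_k} \in B_r(a) \subseteq O$. This contradicts $x_{n_k} \in A = X \setminus O$. Hence $a \in A$, the subsequence $(x_{n_k})_{k \in \mathbb{N}}$ converges to a point of $A$, and Proposition~\ref{graa} yields that $A = X \setminus O$ is compact.

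I do not anticipate a genuine obstacle here; the argument is short and the only mild point worth a remark is the degenerate case in which $A$ is finite, but that case is already subsumed by Proposition~\ref{graa} (any infinite sequence in a finite set has a constant, hence convergent, subsequence with limit in the set). One could also observe that the strong triangle inequality is not actually used — the same reasoning shows that a closed subset of any compact metric space is compact — but phrasing the proof inside the ultrametric setting keeps it consistent with the surrounding development.
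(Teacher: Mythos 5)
Your argument is correct and complete: extracting a convergent subsequence in the compact space $X$ via Proposition~\ref{graa} and then ruling out $a\in O$ by the openness of $O$ is exactly the right reduction, and your parenthetical about recentering the ball at $a$ disposes of the only small subtlety in the paper's definition of an open set (the ball witnessing openness need not be centered at the point in question). The paper itself does not write out an argument at all --- it simply invokes Theorem~12.2.3 of \cite{Sear}, i.e.\ the standard fact that closed subsets of compact spaces are compact. So your proposal differs only in being self-contained: it re-derives that standard fact from the sequential characterization of compactness already stated in the paper, rather than citing it. Nothing ultrametric-specific is used, as you correctly observe, and the degenerate cases (finite or empty $X\setminus O$) are handled by Proposition~\ref{graa} vacuously or by a constant subsequence, as you note. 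No gaps.
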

The last proposition  follows directly from Theorem 12.2.3 of \cite{Sear}.

\begin{definition} A subset $A$ of an ultrametric space $(X,d)$ is called \textit{totally bounded} if for every $r > 0$ there is a finite set $\{ x_1, \dots, x_n \}\subseteq X$ such that  
\begin{equation*}
A \subseteq \bigcup_{i=1}^{n} B_r(x_i).
\end{equation*}
\end{definition}

Recall also that a sequence $(x_n)_{n \in \mathbb{N}}$ of points in an ultrametric space $(X,d)$ is a {\it Cauchy sequence} iff
\begin{equation*}
    \lim\limits_{\substack{n \to \infty \\ m \to \infty}} d(x_n, x_m) = 0.
\end{equation*}

\begin{proposition}\label{erra}
    A subset $A$ of an ultrametric space $(X, d)$ is totally bounded if and only if every infinite sequence of points in $A$ contains a Cauchy subsequence.
\end{proposition}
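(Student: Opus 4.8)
The plan is to prove the two implications separately and directly from the definitions; nothing beyond the definition of total boundedness and the notion of a Cauchy sequence is needed, so the argument is short.

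For the forward implication, suppose $A$ is totally bounded and let $(x_n)_{n\in\mathbb{N}}$ be an arbitrary infinite sequence in $A$. I would extract a Cauchy subsequence by a nested (diagonal) argument. For each $k\in\mathbb{N}$, applying total boundedness with $r=2^{-k}$ yields a finite cover of $A$ by balls of radius $2^{-k}$; since the terms indexed by the current infinite index set are distributed among finitely many of these balls, one of the balls must contain infinitely many of the remaining terms. Iterating produces a decreasing chain of infinite subsequences $(x_n)_{n\in\mathbb{N}}\supseteq S_1\supseteq S_2\supseteq\cdots$ such that all terms of $S_k$ lie in a single ball of radius $2^{-k}$. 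Because the space is ultrametric, any two points of such a ball are at distance strictly less than $2^{-k}$, so the diagonal subsequence $(y_k)_{k\in\mathbb{N}}$, where $y_k$ is chosen from $S_k$ with index strictly larger than that of $y_{k-1}$, satisfies $d(y_k,y_l)<2^{-\min\{k,l\}}$ and is therefore Cauchy.

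For the converse I would argue by contraposition. Assume $A$ is not totally bounded; then there is some $r>0$ for which no finite union of balls of radius $r$ covers $A$. Choose $x_1\in A$ arbitrarily and, recursively, once $x_1,\dots,x_n\in A$ have been selected, pick $x_{n+1}\in A\setminus\bigcup_{i=1}^{n}B_r(x_i)$; such a point exists at every step precisely because no finite family of $r$-balls covers $A$ (in particular not one centered in $A$). Then $d(x_n,x_m)\geq r$ whenever $n\neq m$, so the sequence $(x_n)_{n\in\mathbb{N}}$ admits no Cauchy subsequence, contradicting the hypothesis.

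There is no real obstacle here; the only point needing a little care is making the diagonal extraction in the forward direction precise, i.e.\ keeping the chosen indices strictly increasing while staying inside each $S_k$, which is entirely routine. Alternatively, one could simply invoke a standard reference such as \cite{Sear}.
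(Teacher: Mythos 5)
Your proof is correct. Note that the paper does not actually prove this proposition; it simply points to Theorem 7.8.2 of \cite{Sear}, so your self-contained argument is a genuine addition rather than a parallel of the paper's reasoning. Both directions are sound: the forward direction is the standard pigeonhole-plus-diagonal extraction (and your remark that in an ultrametric space two points of a ball of radius $2^{-k}$ are at distance strictly less than $2^{-k}$ is correct, by the strong triangle inequality applied through the center; in a general metric space one would only get $<2^{-k+1}$, which would still suffice). The converse by contraposition is also fine; the one point worth making explicit is that the paper's definition of total boundedness allows the ball centers $x_1,\dots,x_n$ to lie anywhere in $X$, not just in $A$, so the negation you invoke says that \emph{no} finite family of $r$-balls with centers in $X$ covers $A$ --- in particular none with centers in $A$ --- which is exactly what your recursive choice of $x_{n+1}\in A\setminus\bigcup_{i=1}^{n}B_r(x_i)$ requires. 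With that reading, the sequence you build satisfies $d(x_n,x_m)\geq r$ for $n\neq m$ and admits no Cauchy subsequence, completing the argument.
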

\noindent See, for example, Theorem 7.8.2 in \cite{Sear}.

Let $(X,d)$ and $(Y,\rho)$ be semimetric spaces. Recall that a bijective mapping $\Phi: X \to Y$ is an \textit{isometry} if the equality  
\begin{equation*}  
d(x,y) = \rho(\Phi(x), \Phi(y))  
\end{equation*}  
holds for all $x,y \in X$.

Let $(X, d)$ be an ultrametric space and let $A \subseteq X$. A subset $S$ of $A$ is said to be {\it dense} in $A$ if for every $a \in A$ there is a sequence $(s_n)_{n \in \mathbb{N}}$ of points of $S$ such that  
\begin{equation*}
a = \lim\limits_{n \to \infty} s_n.
\end{equation*}

    An ultrametric space $(X,d)$ is \textit{complete} if every Cauchy sequence of points of $X$ converges to a point of $X$.

\begin{definition}\label{ssmmcc}
    Let $(X,d)$ be an ultrametric space.  
An ultrametric space $(Y,\rho)$ is called a \textit{completion} of $(X,d)$ if $(Y,\rho)$ is complete and $(X,d)$ is isometric to dense subspace of $(Y,\rho)$.
\end{definition}


\begin{proposition}\label{qru}
    The completion $(Y, \rho)$ of an ultrametric space $(X,d)$ is compact if and only if $(X,d)$ is totally bounded.
\end{proposition}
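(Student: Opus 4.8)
The plan is to reduce compactness of $(Y,\rho)$ and total boundedness of $(X,d)$ to the sequential criteria of Propositions~\ref{graa} and~\ref{erra}, and to bridge the two spaces using the density of $X$ in its completion. First I would identify $(X,d)$ with an isometric copy $X'$ that is dense in $(Y,\rho)$. By Proposition~\ref{erra}, total boundedness is equivalent to the property ``every infinite sequence contains a Cauchy subsequence'', which refers only to the space itself and is preserved by isometries; hence $(X,d)$ is totally bounded if and only if every infinite sequence of points of $X'$ contains a Cauchy subsequence.

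For the ``only if'' direction I would assume $(Y,\rho)$ compact. By Proposition~\ref{graa}, every infinite sequence of points of $Y$ has a subsequence converging to a point of $Y$, and a convergent sequence is a Cauchy sequence (if $\lim_n y_n = a$, then $\rho(y_n,y_m) \le \max\{\rho(y_n,a),\rho(a,y_m)\} \to 0$). In particular, every infinite sequence of points of $X' \subseteq Y$ has a Cauchy subsequence, so $(X,d)$ is totally bounded by the reduction above.

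For the ``if'' direction I would assume $(X,d)$ totally bounded and check the criterion of Proposition~\ref{graa} for $(Y,\rho)$. Let $(y_n)_{n \in \mathbb{N}}$ be an infinite sequence of points of $Y$. Since $X'$ is dense in $Y$, for each $n$ choose $x_n \in X'$ with $\rho(y_n,x_n) < 1/n$. By the reduction, the sequence $(x_n)_{n \in \mathbb{N}}$ in $X'$ has a Cauchy subsequence $(x_{n_k})_{k \in \mathbb{N}}$; as $(Y,\rho)$ is complete, $(x_{n_k})_{k \in \mathbb{N}}$ converges to some $a \in Y$. The strong triangle inequality then gives
\[
\rho(y_{n_k},a) \le \max\{\rho(y_{n_k},x_{n_k}),\,\rho(x_{n_k},a)\} \le \max\{1/n_k,\,\rho(x_{n_k},a)\} \longrightarrow 0,
\]
so $(y_{n_k})_{k \in \mathbb{N}}$ converges to $a \in Y$. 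Thus $(Y,\rho)$ satisfies the criterion of Proposition~\ref{graa} and is compact.

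The argument is essentially bookkeeping over the sequential criteria together with completeness, so I do not anticipate a genuine obstacle; the only point requiring mild care is the passage between $(X,d)$ and its dense isometric image $X'$ in $Y$, which the characterization furnished by Proposition~\ref{erra} makes routine.
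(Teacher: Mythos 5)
Your argument is correct. Note that the paper does not actually prove Proposition~\ref{qru}; it simply cites Corollary~4.3.30 of Engelking's \emph{General Topology}, so there is no internal proof to compare against. What you supply is a self-contained sequential proof built entirely from the tools the paper has already stated: Proposition~\ref{graa} (compactness via convergent subsequences), Proposition~\ref{erra} (total boundedness via Cauchy subsequences), the definition of completion, and the strong triangle inequality. Both directions are sound: in the ``only if'' direction, convergent implies Cauchy and the sequential criterion for total boundedness is an isometry invariant, so it transfers cleanly between $(X,d)$ and its dense copy $X'$; in the ``if'' direction, the approximation $\rho(y_n,x_n)<1/n$ is legitimately available from the paper's sequence-based definition of density, the Cauchy subsequence converges by completeness of $(Y,\rho)$, and the ultrametric inequality carries the limit back to $(y_{n_k})$. (The only cosmetic remark is that if the chosen $(x_n)$ has finite range, a constant subsequence is trivially Cauchy, so the ``infinite sequence'' wording of Proposition~\ref{erra} causes no trouble.) Your route is more elementary and keeps the paper self-contained, at the cost of a paragraph of bookkeeping; the paper's citation buys brevity and the full generality of the metric-space statement in Engelking.
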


For the proof, see, for example, Corollary 4.3.30 in \cite{Engelking}.

Let $(X, d)$ be a semimetric space. Below we denote by $D(X)$ the set of all distances between points of $(X, d)$,
\begin{equation}
\label{ew}
    D(X) := \{d(x, y) : x, y \in X\}.
\end{equation}

The next definition gives us a generalization of the concept of isometry.

\begin{definition}
\label{scguite}
Let $(X, d)$ and $(Y, \rho)$ be semimetric spaces.
 A bijective mapping $\Phi\colon X \to Y$ is a {\it weak similarity} if there is a strictly increasing bijection $f\colon D(Y) \to D(X)$ such that the equality
\begin{equation*}
d(x, y) = f \left( \rho \left( \Phi(x), \Phi(y) \right) \right)
\end{equation*}
holds for all $x, y \in X$.
\end{definition}

\begin{remark}
The notion of weak similarity was introduced in \cite{Dovgoshey-Petrov}.
\end{remark}

We will say that semimetric spaces $(X,d)$ and $(Y,\rho)$ are weakly similar if there is weak similarity  $
X \to Y  $.

The next proposition follows directly from Definition~\ref{wdvthk} and Definition~\ref{scguite}.

\begin{proposition}\label{ooyggr}
Let $(X,d)$ be an ultrametric space and $(Y,\rho)$ be a semimetric space. If $(X,d)$ and $(Y,\rho)$
 are weakly similar, then $(Y,\rho)$ is also an ultrametric space.
\end{proposition}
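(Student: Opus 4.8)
The plan is to prove Proposition~\ref{ooyggr} by checking directly that $\rho$ satisfies the strong triangle inequality; the other two requirements for an ultrametric (symmetry, and $\rho(a,b)=0 \iff a=b$) are already built into the hypothesis that $(Y,\rho)$ is a semimetric. So fix a weak similarity $\Phi\colon X\to Y$ together with a strictly increasing bijection $f\colon D(Y)\to D(X)$ such that $d(x,y)=f\bigl(\rho(\Phi(x),\Phi(y))\bigr)$ for all $x,y\in X$.

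First I would take arbitrary points $a,b,c\in Y$ and, using bijectivity of $\Phi$, choose $x,y,z\in X$ with $\Phi(x)=a$, $\Phi(y)=b$, $\Phi(z)=c$. Applying the strong triangle inequality for the ultrametric $d$ to $x,y,z$ and rewriting each of the three distances through $f$ gives
\[
f(\rho(a,b)) \le \max\{\,f(\rho(a,c)),\,f(\rho(c,b))\,\}.
\]
Next I would note that $\rho(a,b),\rho(a,c),\rho(c,b)\in D(Y)$, hence $\max\{\rho(a,c),\rho(c,b)\}\in D(Y)$ as well (it is literally one of the two distances), so $f$ is defined on it; and since $f$ is strictly increasing, $\max\{f(\rho(a,c)),f(\rho(c,b))\}=f\bigl(\max\{\rho(a,c),\rho(c,b)\}\bigr)$. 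Thus $f(\rho(a,b))\le f\bigl(\max\{\rho(a,c),\rho(c,b)\}\bigr)$, and applying the inverse $f^{-1}$, which is again strictly increasing and therefore order-preserving, yields $\rho(a,b)\le\max\{\rho(a,c),\rho(c,b)\}$. Since $a,b,c\in Y$ were arbitrary, $(Y,\rho)$ is an ultrametric space.

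I do not expect a genuine obstacle here: the argument is just a transfer of the inequality along the bijection $\Phi$ combined with the monotonicity of $f$. The only point worth a moment's care is ensuring that $f$ and $f^{-1}$ are only ever evaluated at arguments lying in $D(Y)$, respectively $D(X)$ — which is exactly why it matters that $\max\{\rho(a,c),\rho(c,b)\}$ is one of the original two distances rather than some new value — together with the elementary observation that a strictly increasing function commutes with the maximum of two points of its domain.
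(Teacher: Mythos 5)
Your proof is correct and is exactly the direct verification the paper has in mind: the paper gives no written proof, asserting the proposition ``follows directly'' from Definitions~\ref{wdvthk} and~\ref{scguite}, and your argument (transfer the strong triangle inequality along $\Phi$, use that a strictly increasing $f$ commutes with $\max$ and has an order-preserving inverse, while checking $f$ and $f^{-1}$ are only evaluated on $D(Y)$ and $D(X)$) is precisely that routine check, carried out carefully.
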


Let us now recall  some concepts from graph theory.

A \textit{simple graph} is a pair $(V, E)$ consisting of a non-empty set $V$ and a set $E$ whose elements are unordered pairs $\{u, v\}$ of different points $u, v \in V$. For a graph $G = (V, E)$, the sets $V = V(G)$ and $E = E(G)$ are called the \textit{set of vertices} and the \textit{set of edges}, respectively. A graph $G$ is \textit{finite} if $V(G)$ is a finite set. A graph $H$ is, by definition, a \textit{subgraph} of a graph $G$ if the inclusions $V(H) \subseteq V(G)$ and $E(H) \subseteq E(G)$ hold. In this case, we simply write $H \subseteq G$.

In what follows, we will use the standard definitions of paths and cycles, see, for example, \cite[Section 1.3]{Diestel2017}. A graph $G$ is {\it connected} if for every two distinct $u, v \in V(G)$ there is a path $P$ joining $u$ and $v$ in $G,$
\begin{equation*}
    u,\, v\in V(P) \,\, \text{and}\,\, P\subseteq G.
\end{equation*}

A connected graph without cycles is called a {\it tree}.

\begin{definition}
\label{wer}
A tree $T$ is a {\it star graph} if there is a vertex $c \in V(T)$, the {\it center} of $T$, such that $c$ and $v$ are adjacent for every $v \in V(T) \setminus \{c\}$ but for all $u, w \in T$ we have $\{u, w\} \notin E(T)$ whenever $u\neq c$ and $w\neq c.$
\end{definition}

An infinite graph $R$ of the form  
\begin{align}
\label{ktas}
V(R) &= \{x_1, x_2, \ldots, x_n, x_{n+1}, \ldots\}, \\  
\label{ktas1}
E(R) &= \{\{x_1, x_2\}, \ldots, \{x_n, x_{n+1}\}, \dots\},
\end{align}
where all $x_n$ are assumed to be distinct, is called a \textit{ray}~\cite{Diestel2017}.  
If \eqref{ktas} and \eqref{ktas1} hold, we will write $R=(x_1,x_2,\ldots, x_n,\ldots).$
It is clear that every ray is a tree.

In what follows, by \textit{labeled} tree $T(l)$ we will mean a tree $T$ equipped with a labeling  $l\colon V(T) \to \mathbb{R}^+.$

Let $T(l)$ be a labeled tree. As in~\cite{Dov2020TaAoG} we define a mapping $d_l \colon V(T) \times
V(T) \to \mathbb{R}^{+}$ by
\begin{equation}  \label{e1.1}
d_l(u, v) =
\begin{cases}
0, & \text{if } u = v, \\
\max\limits_{w \in V(P)} l(w), & \text{otherwise},%
\end{cases}%
\end{equation}
where $P$ is the path joining $u$ and $v$ in $T$.

The following result is a direct corollary of Proposition\,3.2 from \cite{Dov2020TaAoG}.

\begin{theorem}\label{t1.4}
Let $T = T(l)$ be a labeled tree. Then the
function $d_l$ is an ultrametric on $V(T)$ if and only if the inequality
\begin{equation*}
\max\{l(u), l(v)\} > 0
\end{equation*}
holds for every edge $\{u, v\}$ of $T$.
\end{theorem}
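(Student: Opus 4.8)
The plan is to reduce the statement to two observations: first, that $d_l$ is symmetric and satisfies the strong triangle inequality for \emph{every} labeling $l\colon V(T)\to\mathbb{R}^+$; and second, that the only remaining semimetric axiom, namely $d_l(u,v)=0\iff u=v$, is precisely what the per-edge condition encodes.

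For the first observation I would use the structure of trees. Since for any two vertices $u,v$ the path $P$ joining them in $T$ is unique, formula \eqref{e1.1} gives $d_l(u,v)=d_l(v,u)$ at once. For the strong triangle inequality, fix $u,v,z\in V(T)$; if two of them coincide the inequality is immediate, so assume they are pairwise distinct. The key graph-theoretic fact is that every vertex lying on the path $P_{uv}$ joining $u$ and $v$ lies on $P_{uz}$ or on $P_{zv}$ (equivalently, there is a median vertex $m$ through which all three pairwise paths pass, and $P_{uv}$ is the concatenation of the $u$--$m$ part of $P_{uz}$ with the $m$--$v$ part of $P_{zv}$). Granting this, $\max_{w\in V(P_{uv})}l(w)\le\max\bigl\{\max_{w\in V(P_{uz})}l(w),\ \max_{w\in V(P_{zv})}l(w)\bigr\}$, which is exactly $d_l(u,v)\le\max\{d_l(u,z),d_l(z,v)\}$. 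As the statement is advertised as a direct corollary of Proposition~3.2 of \cite{Dov2020TaAoG}, I would cite that proposition for this pseudo-ultrametric behaviour rather than reprove the median fact.

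It then remains to check equivalence with the edge condition. Suppose first that $\max\{l(u),l(v)\}>0$ for every edge $\{u,v\}$ of $T$. If $u\neq v$, write the joining path as $u=w_0,w_1,\dots,w_k=v$ with $k\ge 1$; applying the hypothesis to $\{w_0,w_1\}$ gives $\max\{l(w_0),l(w_1)\}>0$, whence $d_l(u,v)=\max_{0\le i\le k}l(w_i)>0$. Together with $d_l(u,u)=0$ and the first observation, this shows $d_l$ is an ultrametric. Conversely, suppose $d_l$ is an ultrametric and let $\{u,v\}$ be an edge of $T$. Then the path joining $u$ and $v$ is the edge itself, so $V(P)=\{u,v\}$ and $d_l(u,v)=\max\{l(u),l(v)\}$ by \eqref{e1.1}; since $u\neq v$, non-degeneracy of $d_l$ forces $d_l(u,v)>0$, i.e. $\max\{l(u),l(v)\}>0$.

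The only genuinely non-routine ingredient is the median/concatenation fact used for the strong triangle inequality, but this is standard for trees (and is what underlies Proposition~3.2 of \cite{Dov2020TaAoG}); the substantive part of the argument is merely the bookkeeping that translates ``$d_l(u,v)=0\Rightarrow u=v$'' into the per-edge positivity condition.
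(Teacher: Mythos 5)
Your argument is correct and matches the paper's route: the paper gives no proof of its own but derives the statement as a direct corollary of Proposition~3.2 of \cite{Dov2020TaAoG}, which is exactly the citation you lean on for the symmetry and strong triangle inequality, and your remaining bookkeeping (path of length $\ge 1$ forces $d_l(u,v)>0$ under the edge condition; an edge with $l(u)=l(v)=0$ gives $d_l(u,v)=0$ with $u\neq v$) correctly establishes the equivalence with non-degeneracy.
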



In what follows, we also will say that an ultrametric space $(X, d)$ is \textit{generated by labeled tree} $T(l)$ if the equalities
$
X = V(T) $ and $d = d_l$
hold, where $d_l$ is defined as in \eqref{e1.1}.

\begin{definition}
    \label{thukm}
An ultrametric space $(X, d)$ belongs to the  class ${\bf US}$ if $(X, d)$ is generated by some labeled star graph.
\end{definition}

The following theorem was proved in \cite{Dov-Rov}.

\begin{theorem}
    \label{xz}
Let $(X, d)$ be an ultrametric space. Then the following statements are equivalent:
\begin{enumerate}[label=\normalfont(\roman*), left=0pt] 
\item $(X, d) \in {\bf US}.$

\item There is $x_0 \in X$ such that 
\begin{equation}
\label{zg}
    d(x_0, x) \leq d(y,x)
\end{equation}
whenever $x,y\in X$ and
\begin{equation}
\label{cd}
    x_0 \neq x \neq y.
\end{equation}
\end{enumerate}

\end{theorem}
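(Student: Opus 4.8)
The plan is to prove the two implications of Theorem~\ref{xz} separately, working directly from Definition~\ref{wer} and formula~\eqref{e1.1}.

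First I would prove (i)$\Rightarrow$(ii). Assume $(X,d)$ is generated by a labeled star graph $T(l)$ with center $c$. The natural candidate for $x_0$ is $c$ itself, \emph{provided} $X$ has more than one point (the one-point case is trivial since \eqref{cd} is then vacuous). Fix $x,y\in X$ with $x_0\neq x\neq y$ where $x_0:=c$. Since $T$ is a star graph, the path joining $c$ and $x$ is just $(c,x)$, so by \eqref{e1.1} we have $d(c,x)=\max\{l(c),l(x)\}$. The path joining $y$ and $x$: if $y=c$ this is again $(c,x)$ and the inequality \eqref{zg} is an equality; if $y\neq c$, then since $\{y,x\}\notin E(T)$ the path from $y$ to $x$ passes through $c$, i.e. it is $(y,c,x)$, so $d(y,x)=\max\{l(y),l(c),l(x)\}\geq\max\{l(c),l(x)\}=d(c,x)$. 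This gives \eqref{zg} in all cases.

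Next I would prove (ii)$\Rightarrow$(i), the more substantial direction. Given $x_0$ satisfying the stated four-point-type condition, I want to build a star graph on vertex set $V(T):=X$ with center $x_0$, and a labeling $l$ such that $d_l=d$. Define $l(x_0):=\sup_{x\in X}d(x_0,x)$ (with $l(x_0):=0$ if $X=\{x_0\}$) and $l(x):=d(x_0,x)$ for $x\neq x_0$. Wait --- I should be careful: $\sup$ may not be attained, but I only need $l(x_0)\in\mathbb{R}^+$, so I instead set $l(x_0):=0$ and check whether that already works; if not, I adjust. Let me reconsider: with $l(x)=d(x_0,x)$ for $x\neq x_0$ and $l(x_0)$ to be chosen, formula \eqref{e1.1} for the star graph gives $d_l(x_0,x)=\max\{l(x_0),d(x_0,x)\}$ and, for $x\neq x_0\neq y$, $d_l(x,y)=\max\{d(x_0,x),d(x_0,y),l(x_0)\}$. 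To get $d_l(x_0,x)=d(x_0,x)$ I need $l(x_0)\leq d(x_0,x)$ for all $x\neq x_0$, so $l(x_0):=\inf_{x\neq x_0}d(x_0,x)$ (or $0$) works --- in fact $l(x_0):=0$ suffices for that equation. Then I must verify $d_l(x,y)=d(x,y)$ for distinct $x,y\neq x_0$, i.e. that
\begin{equation*}
d(x,y)=\max\{d(x_0,x),d(x_0,y)\}.
\end{equation*}
The inequality ``$\leq$'' is immediate from the strong triangle inequality. For ``$\geq$'': by hypothesis \eqref{zg} applied with the roles $x\mapsto x,\ y\mapsto y$ we get $d(x_0,x)\leq d(y,x)$, and applied with $x\mapsto y,\ y\mapsto x$ we get $d(x_0,y)\leq d(x,y)$; hence $\max\{d(x_0,x),d(x_0,y)\}\leq d(x,y)$. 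This closes the argument, and then Theorem~\ref{t1.4} (together with $\max\{l(u),l(v)\}>0$ on each edge, which follows because $l(x)=d(x_0,x)>0$ for every non-center vertex $x$) confirms $d_l$ is genuinely an ultrametric, so $(X,d)\in{\bf US}$.

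The only genuine subtlety --- and the point I would flag as the main obstacle --- is the bookkeeping around the center's label and degenerate cases: ensuring $l(x_0)$ is a well-defined nonnegative real for which \emph{both} defining equations of $d_l$ reproduce $d$ simultaneously, and handling $|X|=1$ and $|X|=2$ where the star graph and/or the hypothesis degenerates. Once those are dispatched, the core of the equivalence is exactly the identity $d(x,y)=\max\{d(x_0,x),d(x_0,y)\}$ for $x_0\neq x\neq y$, which is essentially a restatement of \eqref{zg} combined with the strong triangle inequality.
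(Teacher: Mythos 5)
Your proof is correct, and the construction in your (ii)$\Rightarrow$(i) direction --- taking $x_0$ as the center with labeling $l(x)=d(x_0,x)$ (so $l(x_0)=0$) and reducing everything to the identity $d(x,y)=\max\{d(x_0,x),d(x_0,y)\}$ --- is exactly the one the paper records in Proposition~\ref{fraas} and attributes to the proof of Theorem~2.1 in \cite{Dov-Rov}. The hesitation over the value of $l(x_0)$ is unnecessary (the choice $l(x_0)=0$ is forced by requiring $d_l(x_0,x)=d(x_0,x)$ and is the paper's choice as well), but the argument as finally stated is complete.
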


The next proposition can be considered as a partial refinement of Theorem~\ref{xz}.

\begin{proposition}\label{fraas}
Let $(X,d)$ be an ultrametric space and let $x_0 \in X$. If, for $x,y \in X$, inequality \eqref{zg} holds whenever we have \eqref{cd}, then $(X,d)$ is generated by labeled star graph $S(l)$ with center $x_0$ and labeling $l: X \to \mathbb{R}^+$ defined as  
\begin{equation*}
    l(x) = d(x,x_0)
\end{equation*}
for each $x \in X$.  
\end{proposition}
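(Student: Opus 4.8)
The plan is to verify directly that the star graph $S(l)$ described in the statement generates exactly the given ultrametric space $(X,d)$, i.e.\ that $d_{l} = d$ on $X = V(S)$, where $d_{l}$ is defined by \eqref{e1.1}. First I would check that $S(l)$ is a legitimate labeled star graph with the required properties: the vertex set is $X$, the center is $x_{0}$, and every $x \in X \setminus \{x_{0}\}$ is adjacent to $x_{0}$ and to no one else, which is immediate from Definition~\ref{wer}. Then I would confirm that $d_{l}$ is actually an ultrametric via Theorem~\ref{t1.4}: for each edge $\{x_{0}, x\}$ we need $\max\{l(x_{0}), l(x)\} > 0$; since $l(x) = d(x, x_{0})$ and $x \neq x_{0}$, we have $l(x) = d(x,x_{0}) > 0$, so the hypothesis of Theorem~\ref{t1.4} holds.

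The core of the argument is the pointwise identity $d_{l}(u,v) = d(u,v)$ for all $u, v \in X$. This splits into a short case analysis on the path $P$ joining $u$ and $v$ in the star $S$. If $u = v$ both sides are $0$. If exactly one of $u, v$ equals $x_{0}$, say $v = x_{0}$ and $u \neq x_{0}$, then $P = (u, x_{0})$, so $d_{l}(u, x_{0}) = \max\{l(u), l(x_{0})\}$. Here one must be slightly careful about the value $l(x_{0})$, which the statement does not pin down except via the formula $l(x_{0}) = d(x_{0}, x_{0}) = 0$; so $d_{l}(u, x_{0}) = \max\{l(u), 0\} = l(u) = d(u, x_{0})$, as desired. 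The remaining case is $x_{0} \neq u \neq v \neq x_{0}$: then $P = (u, x_{0}, v)$ and $d_{l}(u,v) = \max\{l(u), l(x_{0}), l(v)\} = \max\{d(u, x_{0}), d(v, x_{0})\}$. So it remains to show $\max\{d(u,x_{0}), d(v,x_{0})\} = d(u,v)$ whenever $u, v, x_{0}$ are pairwise distinct.

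This last equality is where the hypothesis of the proposition is used and is the only genuinely substantive step. The strong triangle inequality gives $d(u,v) \le \max\{d(u, x_{0}), d(x_{0}, v)\}$ for free. For the reverse inequality, apply the assumed inequality \eqref{zg} twice: taking $(x,y) = (u,v)$ (legitimate since $x_{0} \neq u$ and $u \neq v$) yields $d(x_{0}, u) \le d(v, u) = d(u,v)$; taking $(x,y) = (v,u)$ yields $d(x_{0}, v) \le d(u,v)$. Hence $\max\{d(u,x_{0}), d(v,x_{0})\} \le d(u,v)$, and combined with the strong triangle inequality we get equality. I do not expect any serious obstacle here; the main thing to get right is the bookkeeping of which ordered pairs $(x,y)$ are allowed in \eqref{zg} (the condition \eqref{cd} must be respected in each application) and the explicit evaluation $l(x_{0}) = 0$, which makes the "one endpoint is the center" case go through cleanly.

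Having established $d_{l} = d$ on $X = V(S)$, the space $(X,d)$ is by definition generated by the labeled star graph $S(l)$, which completes the proof. If one wishes, one can remark that this also re-proves the implication (ii)$\Rightarrow$(i) of Theorem~\ref{xz} with an explicit witness, which is presumably the point of isolating this as a separate proposition.
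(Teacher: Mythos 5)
Your proof is correct and is exactly the direct verification that the paper delegates to the second part of the proof of Theorem~2.1 in \cite{Dov-Rov}: one checks $d_l=d$ by the three-case analysis on the path in the star, using $l(x_0)=0$ for the center case and two applications of \eqref{zg} together with the strong triangle inequality for the case of two non-center points. All the bookkeeping (the admissibility of the pairs $(u,v)$ and $(v,u)$ under condition \eqref{cd}, and the positivity check needed for Theorem~\ref{t1.4}) is handled correctly.
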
 

The proof of this proposition coincides with the second part of the proof of Theorem 2.1 in \cite{Dov-Rov}.

\section{Motivating problems}

\hspace{4 mm} The four-point ultrametric spaces $(X_4, d_4)$ and $(Y_4, \rho_4)$  
depicted in Figure \ref{cis} below do not belong to ${\bf US}$.

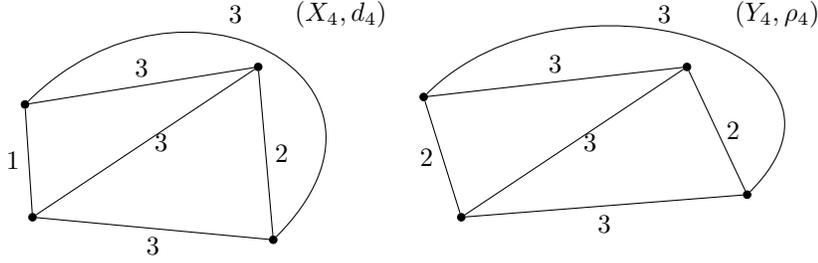
\begin{figure}[H]
\centering
\begin{tikzpicture}[remember picture]

  \node[draw=none] (label1) at (2.7,2.7) {$3$};
  \node[draw=none] (label1) at (4.1,2.7) {$(X_4,d_4)$};
    
    \node[draw, circle, fill=black, inner sep=1pt] (A) at (0,0) {};
    \node[draw, circle, fill=black, inner sep=1pt] (B) at (3.2,-0.3) {};
    \node[draw, circle, fill=black, inner sep=1pt] (C) at (-0.1,1.5) {};
    \node[draw, circle, fill=black, inner sep=1pt] (D) at (3,2) {};

    \draw (A) -- (B) node[midway, below] {$3$};
    \draw (A) -- (C) node[midway, left] {$1$};
    \draw (A) -- (D) node[midway, right] {$3$};
    \draw (C) -- (D) node[midway, above] {$3$};
    \draw (B) -- (D) node[midway, right] {$2$};
    \draw (B) to[out=45,in=45,looseness=2] (C) node[midway,right] {};

  \begin{scope}[xshift=5.7cm]
    \node[draw=none] (label2) at (2.7,2.7) {$3$};
    \node[draw=none] (label2) at (4.2,2.7) {$(Y_4,\rho_4)$};
 
    \node[draw, circle, fill=black, inner sep=1pt] (A) at (0,0) {};
    \node[draw, circle, fill=black, inner sep=1pt] (B) at (3.8,0.3) {};
    \node[draw, circle, fill=black, inner sep=1pt] (C) at (-0.5,1.6) {};
    \node[draw, circle, fill=black, inner sep=1pt] (D) at (3,2) {};

    \draw (A) -- (B) node[midway, below] {$3$};
    \draw (A) -- (C) node[midway, left] {$2$};
    \draw (A) -- (D) node[midway, right] {$3$};
    \draw (C) -- (D) node[midway, above] {$3$};
    \draw (B) -- (D) node[midway, right] {$2$};
    \draw (B) to[out=45,in=45,looseness=1.5] (C) node[midway,right] {};
  \end{scope}

\end{tikzpicture}
\caption{ $(X_4,d_4)$ and $(Y_4,\rho_4)$ are not ${\bf US}$-spaces by Theorem~\ref{xz}.\label{cis}}
\end{figure}

The next conjectures were formulated in \cite{Dov-Rov}.

\begin{conjecture}
\label{sepjtg}
The following statements are equivalent for every finite ultrametric space $(X^*,d^*)$:
\begin{enumerate}[label=\normalfont(\roman*), left=0pt] 
\item $(X^*, d^*) \notin {\bf US}.$

\item $(X^*, d^*)$  contains a four-point subspace which is weakly similar either to  $(X_4,d_4)$ or  to $(Y_4,\rho_4)$.
\end{enumerate}
\end{conjecture}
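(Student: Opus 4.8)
The plan is to prove the equivalence by establishing each implication separately, the implication \emph{(ii)}$\,\Rightarrow\,$\emph{(i)} being the easy one. Two observations reduce it to Theorem~\ref{xz}. First, ${\bf US}$ is hereditary: if $(X,d)\in{\bf US}$ with a point $x_0$ as in Theorem~\ref{xz} and $\emptyset\neq A\subseteq X$, then either $x_0\in A$, so $x_0$ still satisfies \eqref{zg} inside $A$; or $x_0\notin A$, and then for distinct $a,b\in A$ the strong triangle inequality together with $d(x_0,a)\le d(b,a)$ and $d(x_0,b)\le d(a,b)$ yields $d(a,b)=\max\{d(x_0,a),d(x_0,b)\}$, whence any point of $A$ nearest to $x_0$ witnesses $A\in{\bf US}$. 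Second, condition \emph{(ii)} of Theorem~\ref{xz} refers to the distances of $X$ only through their linear order, so (using also Proposition~\ref{ooyggr}) it is preserved under weak similarity in both directions. Since $(X_4,d_4)\notin{\bf US}$ and $(Y_4,\rho_4)\notin{\bf US}$, a finite ultrametric space containing a four-point subspace weakly similar to either of them cannot lie in ${\bf US}$.

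For \emph{(i)}$\,\Rightarrow\,$\emph{(ii)} I would induct on $|X^*|$. If $|X^*|\le 3$ the statement is vacuous, because every ultrametric space with at most three points belongs to ${\bf US}$. For the inductive step, first rewrite condition \emph{(ii)} of Theorem~\ref{xz} as: $(X,d)\in{\bf US}$ if and only if $X$ has a point $x_0$ which is a nearest neighbour of every other point, i.e.\ $d(x_0,x)=\min_{y\neq x}d(x,y)$ for every $x\neq x_0$. Now let $(X,d)$ be finite with $|X|\ge 2$ and $\operatorname{diam}X=\rho$, and recall that the balls $\{v\in X:d(u,v)<\rho\}$ ($u\in X$) form a partition of $X$ with $d(u,v)=\rho$ whenever $u,v$ belong to distinct blocks. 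The key lemma is: if $X\notin{\bf US}$ then either (a) at least two blocks of this partition have more than one point, or (b) exactly one block $B$ has more than one point and $B\notin{\bf US}$. This follows by contraposition from a direct construction: if at most one block $B$ is non-singleton and $B$ (should it exist) has a nearest-neighbour point $x_0$, then $x_0$ is a nearest neighbour of every point of $X$; indeed, for $x$ outside $B$ all distances from $x$ equal $\rho$, while for $x\in B$ one compares with the distances inside $B$, where $x_0$ is a nearest neighbour, and with $\rho$, which is larger — and if no block is non-singleton then all distances between distinct points of $X$ equal $\rho$ and any point will do.

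Applying this to $(X^*,d^*)\notin{\bf US}$ with $|X^*|\ge 4$: in case (b), $B$ is a proper subspace with $B\notin{\bf US}$, hence $|B|\ge 4$ (spaces with at most three points lie in ${\bf US}$), and the inductive hypothesis gives inside $B$ — and therefore inside $X^*$ — a four-point subspace weakly similar to $(X_4,d_4)$ or $(Y_4,\rho_4)$. In case (a), choose blocks $P,Q$ with $|P|,|Q|\ge 2$, pick distinct $a,b\in P$ and distinct $a',b'\in Q$ (renamed so that $d^*(a,b)\le d^*(a',b')$), and look at $\{a,b,a',b'\}$: one has $0<d^*(a,b)\le\operatorname{diam}P<\rho$ and $0<d^*(a',b')\le\operatorname{diam}Q<\rho$, while $d^*(x,y)=\rho$ for every cross pair $x\in\{a,b\}$, $y\in\{a',b'\}$. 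Thus $\{a,b\},\{a',b'\}$ is a perfect matching whose two distances both lie strictly below the common value $\rho$ taken on the remaining four pairs; the point-map $a\mapsto A$, $b\mapsto C$, $a'\mapsto B$, $b'\mapsto D$ is then a weak similarity of $\{a,b,a',b'\}$ onto $(X_4,d_4)$ when $d^*(a,b)\neq d^*(a',b')$ (four distance values) and onto $(Y_4,\rho_4)$ when $d^*(a,b)=d^*(a',b')$ (three distance values), the required strictly increasing bijection between the distance sets being read off from this map. This gives \emph{(ii)}.

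The main obstacle is the key lemma — concretely, the constructive verification that a finite ultrametric space whose top-level ball partition has at most one non-singleton block inherits the ${\bf US}$ property of that block (and is automatically in ${\bf US}$ otherwise). Once that is established, the induction and the recognition of the extracted four-point subspace as $(X_4,d_4)$ or $(Y_4,\rho_4)$ are bookkeeping; in particular, for $|X^*|=4$ case (b) cannot occur and case (a) forces $X^*$ itself to be the desired configuration, which recovers the base situation behind Conjecture~\ref{sepjtg}.
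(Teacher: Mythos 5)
Your proposal is correct, and for the direction \((X^*,d^*)\notin{\bf US}\Rightarrow\) (existence of a bad four-point subspace) it takes a genuinely different route from the paper. The paper (in the proof of Theorem~\ref{eeerlm}) argues directly: it sets \(l(x)=\inf_{y\neq x}d(x,y)\), observes that if \(d(x,y)=\max\{l(x),l(y)\}\) held for all pairs then a minimizer of \(l\) would satisfy Theorem~\ref{xz}(ii), and otherwise extracts from a violating pair \(x,y\) two witnesses \(z,w\) with \(d(x,z),d(y,w)<d(x,y)\); the strong triangle inequality then forces the four remaining distances to be equal, yielding the \((X_4,d_4)\)/\((Y_4,\rho_4)\) dichotomy according to whether \(d(x,z)\neq d(y,w)\). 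You instead induct on \(\operatorname{card}X^*\) via the diametrical ball partition, with the key lemma that a finite space is in \({\bf US}\) iff at most one block of that partition is non-singleton and that block (if any) is in \({\bf US}\); case (a) of your lemma produces exactly the same four-point configuration (a perfect matching of two sub-diametrical distances against four copies of the diameter), and case (b) recurses. Both arguments are sound, and your identification of the extracted quadruple with \((X_4,d_4)\) or \((Y_4,\rho_4)\) matches the paper's. What the paper's version buys is brevity and generality: the same direct argument covers infinite spaces possessing a limit point (where the minimizer of \(l\) is replaced by the limit point, at which \(l\) vanishes), which is how Theorem~\ref{eeerlm} is actually stated; your induction is intrinsically finite. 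What your version buys is a clean recursive characterization of finite \({\bf US}\)-spaces in terms of the canonical quotient hierarchy, which does not appear in the paper. Your easy direction (heredity of \({\bf US}\) for finite subsets plus invariance of Theorem~\ref{xz}(ii) under weak similarity, plus \((X_4,d_4),(Y_4,\rho_4)\notin{\bf US}\)) is essentially the paper's Proposition~\ref{8866gh} combined with the same observation, so there the two proofs coincide.
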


\begin{conjecture}
  \label{tghjnk}
    Let $(X,d)$ be an infinite ${\bf US}$-space generated by labeled star graph with a center $c$,
    let $X_0 := X \setminus \{c\}$, and let $d_0$ be the restriction of $d$ on the set $X_0\times X_0$.
    Then the following statements are equivalent:
\begin{enumerate}[label=\normalfont(\roman*), left=0pt] 
    \item $(X, d)$ is compact.

   \item $(X_0, d_0)$ is generated by labeled ray $R(l)$, $R=(x_1,x_2,\ldots, x_n,\ldots)$,
    such that
 \begin{equation*}
        \lim\limits_{n \to \infty} l(x_n) = 0
    \end{equation*}
and 
    \begin{equation*}
        l(x_n) \geq l(x_{n+1}) > 0
    \end{equation*}
    for every $n \in \mathbb{N}$.
    \end{enumerate}
\end{conjecture}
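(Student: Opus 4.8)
We first put the star labeling into a convenient form. Writing $L(v):=d(v,c)$, we replace the given labeling $l$ of $S$ by $l'$ with $l'(c)=0$ and $l'(v)=\max\{l(c),l(v)\}$ for $v\in X_0$; formula~\eqref{e1.1} shows $d_{l'}=d_l=d$, and Theorem~\ref{t1.4} applied to the edges $\{c,v\}$ gives $l'(v)>0$ for every $v\in X_0$. (Equivalently, one may invoke Proposition~\ref{fraas}, since $c$ satisfies condition~(ii) of Theorem~\ref{xz}.) After this reduction $L(c)=0$, $L(v)=l'(v)>0$ for $v\in X_0$, and
\[
d(u,v)=\max\{L(u),L(v)\}\qquad\text{for all distinct }u,v\in X_0 .
\]
We then establish the key criterion: $(X,d)$ \emph{is compact if and only if the set $S_t:=\{v\in X_0:L(v)\ge t\}$ is finite for every $t>0$}.

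\textbf{Step 2 (proof of the criterion).} Suppose some $S_t$ is infinite and pick pairwise distinct $v_1,v_2,\dots\in S_t$. From the displayed formula and $L(c)=0$ one has $d(v_k,a)\ge t$ for every $a\in X$ not among the $v_j$, and $d(v_k,v_j)\ge t$ for $k\ne j$; hence $(v_k)$ has no convergent subsequence and $(X,d)$ is not compact, by Proposition~\ref{graa}. (Alternatively, $S_t=X\setminus B_t(c)$ is compact by Proposition~\ref{5667dvf} while being uniformly $t$-separated, so it is finite.) Conversely, assume every $S_t$ is finite. An arbitrary sequence in $X$ either has a constant subsequence (which converges) or a subsequence of pairwise distinct points; discarding at most the single term equal to $c$, the latter lies in $X_0$, and finiteness of the sets $S_t$ forces $\liminf_n L(v_n)=0$; passing to a subsequence with $L(v_{n_k})=d(v_{n_k},c)\to 0$ gives convergence to $c$. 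By Proposition~\ref{graa}, $(X,d)$ is compact.

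\textbf{Step 3 (the two implications).} For (i)$\Rightarrow$(ii): if $(X,d)$ is compact then $X_0=\bigcup_{k\in\mathbb{N}}S_{1/k}$ is a countable union of finite sets (because $L>0$ on $X_0$), hence countably infinite, and since the superlevel sets of $L$ are finite we may enumerate $X_0=\{x_1,x_2,\dots\}$ with $L(x_1)\ge L(x_2)\ge\cdots>0$; this non-increasing sequence tends to $0$, as a positive limit would make some $S_t$ infinite. For the ray $R=(x_1,x_2,\dots)$ on the vertex set $X_0$ labeled by $L$, formula~\eqref{e1.1} gives $d_L(x_i,x_j)=\max_{i\le k\le j}L(x_k)=L(x_i)=\max\{L(x_i),L(x_j)\}=d_0(x_i,x_j)$ whenever $i<j$, so $(X_0,d_0)$ is generated by $R(L)$ --- an ultrametric space by Theorem~\ref{t1.4}, since $\max\{L(x_n),L(x_{n+1})\}=L(x_n)>0$ --- with the required monotonicity and vanishing. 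For (ii)$\Rightarrow$(i): if $(X_0,d_0)$ is generated by a ray $R=(x_1,x_2,\dots)$ with a labeling $\lambda$ satisfying $\lambda(x_n)\ge\lambda(x_{n+1})>0$ and $\lambda(x_n)\to 0$, then $X_0=\{x_1,x_2,\dots\}$ and $d(x_i,x_j)=\lambda(x_{\min\{i,j\}})$ by~\eqref{e1.1}; comparing with Step~1 gives $\max\{L(x_i),L(x_j)\}=\lambda(x_i)$ for $i<j$, whence $L(x_j)\le\lambda(x_{j-1})\to 0$, so every $S_t$ is finite and $(X,d)$ is compact by the criterion.

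\textbf{Main obstacle.} No single step is hard; the difficulty is organizational. The two points that need care are the reduction in Step~1 --- one must verify that the prescribed center $c$ genuinely admits the normal-form labeling, so that the product-type formula for $d$ on $X_0$ is valid --- and the bookkeeping in Step~3 showing that the \emph{a priori} unrelated ray labeling $\lambda$ in statement~(ii) is automatically compatible with the canonical labeling $L=d(\cdot,c)$. Once the criterion of Step~2 is available, the equivalence follows immediately.
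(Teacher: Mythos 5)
Your proposal is correct and follows essentially the same route as the paper: both arguments hinge on the auxiliary condition that $\{x\in X: d(x,c)\ge \varepsilon\}$ is finite for every $\varepsilon>0$ (your sets $S_t$, the paper's condition (iii)), prove it equivalent to compactness via Proposition~\ref{graa}, and obtain the ray by enumerating $X_0$ in non-increasing order of $d(\cdot,c)$. The only cosmetic difference is your explicit normalization $l'(c)=0$ in Step~1, which the paper achieves implicitly through Proposition~\ref{fraas}.
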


\section{Finite {\bf US}-spaces}

The main goal of the present section is to describe finite ${\bf US}$-spaces by the above four-point condition.

Let $(X, d)$ be an ultrametric space generated by the labeled star graph $S(l)$ with a center $c$ and let $Y$ be a subset $X$ such that 
\begin{equation}
\label{98822}
    c \in Y.
\end{equation}
Then using Definition~\ref{wer} and formula \eqref{e1.1} with  $T(l) = S(l)$
it is easy to see that 
\begin{equation}
    \label{drfy}
(Y, d|_{Y \times Y}) \in {\bf US},
\end{equation}
where $d|_{Y \times Y}$ is the restriction of the 
ultrametric $d$ on the set $Y \times Y$.

The following example shows that \eqref{drfy}
can be false if $c \notin Y$.

\begin{example}
Let $(X,d)$ be the ultrametric space generated by labeled star graph $S(l)$ depicted in Figure~\ref{ddfff} and let
\begin{equation*}
Y := X \setminus \{c\}.
\end{equation*}
Then, using Theorem~\ref{xz}, 
we obtain
\begin{equation*}
(Y, d|_{Y \times Y}) \notin {\bf US}.
\end{equation*}
\end{example}

\begin{figure}[H]
    \centering
    \begin{tikzpicture}[every node/.style={scale=0.9, inner sep=0pt, outer sep=0pt}]

        \node[circle,draw] (o) at (7,0) {0};

        \node[circle,draw,fill=white,inner sep=2pt] at (9,0) (n1) {$1$}; 
        \node[circle,draw,fill=white,inner sep=1pt] at (8.7,0.9) (n2) {$\frac{1}{2}$};
        \node[circle,draw,fill=white,inner sep=1pt] at (8.2,1.6) (n3) {$\frac{1}{3}$};
        \node[circle,draw,fill=white,inner sep=1pt] at (7.5,2.1) (n4) {};
        \node[circle,draw,fill=white,inner sep=1pt] at (7,2.3) (n5) {};
        \node[circle,draw,fill=white,inner sep=1pt] at (6.5,2.1) (n6) {$\frac{1}{n}$};
        \node[circle,draw,fill=white,inner sep=0.2pt] at (5.8,1.6) (n7) {$\frac{1}{n+1}$};
        \node[circle,draw,fill=white,inner sep=0.2pt] at (5.3,0.9) (n8) {$\frac{1}{n+2}$};
        \node[circle,draw,fill=white,inner sep=1pt] at (5,0) (n9) {};
        \node[circle,draw,fill=white,inner sep=1pt] at (5.3,-0.9) (n10) {};

        \draw (o) -- (n1);
        \draw (o) -- (n2);
        \draw (o) -- (n3);
        \draw[dashed] (o) -- (n4);
        \draw[dashed] (o) -- (n5);
        \draw (o) -- (n6);
        \draw (o) -- (n7);
        \draw (o) -- (n8);
        \draw[dashed] (o) -- (n9);
        \draw[dashed] (o) -- (n10);

        \node at (8.5,2.6) {$S(l)$};

    \end{tikzpicture}
    \caption{The unique point $x_0$ satisfying \eqref{zg} 
    for all $x,y\in Y$ which satisfy
     \eqref{cd}, is the center of the labeled star graph $S(l)$.\label{ddfff}}
\end{figure}
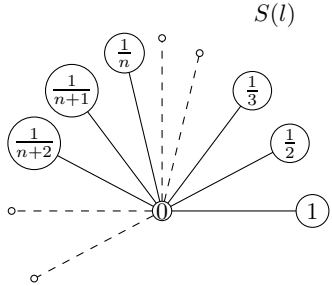

In the case of finite $Y$, condition \eqref{98822} is not necessary for membership \eqref{drfy}.

\begin{proposition} 
\label{8866gh}
Let  (X,d)  be an  {\bf US}-space.
Then $(Y, d|_{Y  \times Y})$ is also an ${\bf US}$-space for every finite non-empty $Y \subseteq X$.
\end{proposition}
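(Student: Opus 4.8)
The plan is to use the metric characterization of ${\bf US}$-spaces from Theorem~\ref{xz}. Since $(X,d)$ is an ${\bf US}$-space, there is a point $x_0 \in X$ satisfying \eqref{zg} whenever \eqref{cd} holds; by Proposition~\ref{fraas} we may take $x_0$ to be the center of a generating star graph. The goal is to produce, for an arbitrary finite non-empty $Y \subseteq X$, a point $y_0 \in Y$ with the analogous property relative to the restricted metric, i.e.\ $d(y_0,y) \le d(z,y)$ whenever $y,z \in Y$ and $y_0 \neq y \neq z$. Then Theorem~\ref{xz} (or Proposition~\ref{fraas}) applied to $(Y, d|_{Y\times Y})$ gives membership \eqref{drfy}.

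First I would dispose of the trivial cases: if $|Y| \le 2$ then $(Y, d|_{Y\times Y})$ is obviously an ${\bf US}$-space (any one- or two-point ultrametric space is generated by a star graph). If $x_0 \in Y$, then \eqref{drfy} already holds by the remark around \eqref{98822}, so we may assume $x_0 \notin Y$ and $|Y| \ge 3$. The natural candidate for $y_0$ is a point of $Y$ that is \emph{closest} to $x_0$, that is, $y_0 \in Y$ with $d(x_0, y_0) = \min\{ d(x_0, y) : y \in Y\}$; such a point exists because $Y$ is finite. I would then verify the required inequality: take $y, z \in Y$ with $y_0 \neq y \neq z$ and show $d(y_0, y) \le d(z, y)$. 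The key computation uses the strong triangle inequality together with \eqref{zg}: since $x_0 \neq y_0 \neq y$ and $x_0 \neq z \neq y$, we have from \eqref{zg} that $d(x_0, y) \le d(y_0, y)$ and $d(x_0, y) \le d(z, y)$, while from \eqref{zg} applied with the roles adjusted (using $x_0 \ne y_0$, $y_0 \ne z$ when $y_0 \ne z$, and the minimality of $d(x_0,y_0)$) one controls $d(y_0, z)$. Concretely, $d(y_0, y) \le \max\{d(y_0, x_0), d(x_0, y)\}$, and since $d(y_0,x_0) \le d(z, x_0) \le \max\{d(z,y), d(y,x_0)\}$ while $d(x_0,y) \le d(z,y)$, one obtains $d(y_0,y) \le d(z,y)$ after a short case analysis on which of $d(z,y)$, $d(x_0,y)$ is the larger; the minimality of $d(x_0, y_0)$ among distances from $x_0$ to $Y$ is exactly what rules out the bad case.

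The main obstacle I expect is the case analysis in that last step: one must be careful that the substitution of $d(x_0, \cdot)$ bounds via \eqref{zg} is legitimate, which requires the strict-inequality hypotheses $x_0 \neq x \neq y$ in \eqref{cd} to be genuinely satisfied at each invocation — in particular when $y_0$ happens to coincide with $z$ one needs a separate (easier) argument, and when $d(z,y) = d(x_0, y)$ or various distances collapse, the ultrametric "isosceles" structure must be invoked rather than a naive chain of triangle inequalities. Organizing the proof around the single inequality $d(x_0, y) \le d(y_0, y)$ (always true by \eqref{zg}) together with $d(x_0, y_0) \le d(x_0, z)$ (minimality) and the strong triangle inequality should make all cases uniform: if $d(z,y) \ge d(x_0,y_0)$ then $d(y_0,y) \le \max\{d(x_0,y_0), d(x_0,y)\} \le \max\{d(z,y), d(z,y)\} = d(z,y)$, and if $d(z,y) < d(x_0, y_0)$ one derives a contradiction with $d(x_0, y_0) \le d(x_0, z) \le \max\{d(x_0,y), d(y,z)\} \le \max\{d(y_0, y), \ldots\}$, forcing $d(x_0, y_0) \le d(y_0, y)$, and then the ultrametric equality for the triangle $\{x_0, y_0, y\}$ pins down $d(y_0,y)$. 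I would write this out carefully as the heart of the argument.
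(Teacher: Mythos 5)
Your proposal is correct and follows essentially the same route as the paper: both select the point of $Y$ nearest the center (equivalently, with minimal label) and verify the condition of Theorem~\ref{xz}, the only difference being that the paper checks the inequality instantly from the explicit formula $d(u,v)=\max\{l(c),l(u),l(v)\}$ while you use the strong triangle inequality. Your case analysis is in fact unnecessary: since $y\in Y$, minimality gives $d(x_0,y_0)\le d(x_0,y)$, so $d(y_0,y)\le\max\{d(y_0,x_0),d(x_0,y)\}=d(x_0,y)\le d(z,y)$ directly, and the ``bad case'' $d(z,y)<d(x_0,y_0)$ is vacuous.
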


\begin{proof}
 Let $S(l)$ be a labeled star graph generating $(X,d)$, let $c$ be the center of $S$, and let $Y$ be a finite non-empty subset of $X$. Since $Y$ is finite, there is a point $y^* \in Y$ such that 
 \begin{equation}
     \label{dgwqq}
     l( y^*) \leq l(y)
 \end{equation}
for each $y \in Y$. Now, using Definition~\ref{wer} and formula \eqref{e1.1} with $T(l) = S(l)$ we obtain the equality
\begin{equation*}
    d(y_i, y_j) = \max \{ l(c), l(y_i), l(y_j) \}
\end{equation*}
for all different $y_i, y_j \in Y$. The last equality and \eqref{dgwqq} give us
\begin{equation}
    d(y^*, y_i) \leq d(y_j, y_i)
\end{equation}
whenever $y^* \neq y_i \neq y_j$. Hence  $(Y, d|_{Y \times Y}) \in {\bf US}$ holds by Theorem~\ref{xz}. 

\end{proof}

The following theorem shows, in particular, that Conjecture~\ref{sepjtg} is true.

\begin{theorem}
\label{eeerlm}
Let  $(X,d)$ be a nonempty  ultrametric space. Suppose that either $X$ is finite, or $X$ has a limit point. Then  $(X,d)$ is an  $ {\bf US}$-space if and only if  $(X,d)$ contains no four-point subspace which is weakly similar to  $(X_4,d_4)$  or to $(Y_4,\rho_4)$.
\end{theorem}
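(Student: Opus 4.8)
The plan is to establish the two implications separately, using Theorem~\ref{xz} throughout as the working description of the class ${\bf US}$. The implication from $(X,d)\in{\bf US}$ to the four-point condition needs no assumption on $X$: by Proposition~\ref{8866gh} every four-point subspace of a ${\bf US}$-space is again a ${\bf US}$-space, while membership in ${\bf US}$ is characterised in Theorem~\ref{xz} purely by inequalities between distances, which a weak similarity transports unchanged (its associated bijection $f$ being strictly increasing). Hence a space weakly similar to a non-${\bf US}$-space is again not a ${\bf US}$-space, and since $(X_4,d_4)\notin{\bf US}$ and $(Y_4,\rho_4)\notin{\bf US}$, the space $(X,d)$ can contain no four-point subspace weakly similar to either of them.

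For the converse I would first isolate the combinatorial content of the four-point condition. Claim: if a four-point ultrametric space $\{p,q,r,s\}$ admits a partition into disjoint pairs $\{p,q\}$, $\{r,s\}$ with $d(p,q)<D'$ and $d(r,s)<D'$, where $D'$ is its diameter, then it is weakly similar to $(X_4,d_4)$ or to $(Y_4,\rho_4)$. Indeed, the strong triangle inequality then forces each of the remaining four pairs to have distance exactly $D'$, so the distance set is $D'$ together with $d(p,q)$ and $d(r,s)$; if these two coincide the pattern of distances is that of $(Y_4,\rho_4)$, otherwise that of $(X_4,d_4)$ (in each of those two spaces the unlabelled distance of Figure~\ref{cis} is forced by the ultrametric inequality to equal $3$), so the evident bijection of vertices together with the order-isomorphism of distance sets is a weak similarity. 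Only this direction of the statement will be used.

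Now assume $(X,d)$ contains no four-point subspace weakly similar to $(X_4,d_4)$ or $(Y_4,\rho_4)$, and take $|X|\ge 2$ (the case $|X|=1$ being trivial). For $z\in X$ set $\iota(z):=\inf\{d(z,w):w\in X\setminus\{z\}\}$. The hypothesis on $X$ enters exactly here: if $X$ is finite then $\inf_{z\in X}\iota(z)$ is attained, and if $X$ has a limit point $a$ then $\iota(a)=0=\inf_{z\in X}\iota(z)$; in either case there is $x_0\in X$ with $\iota(x_0)\le\iota(z)$ for all $z\in X$. I claim this $x_0$ fulfils condition~(ii) of Theorem~\ref{xz}. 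Otherwise, as the required inequality is trivial when $y=x_0$, there are $x,y\in X$ with $x_0\ne x\ne y$, $y\ne x_0$ and $d(y,x)<\delta:=d(x_0,x)$. Then $\iota(x)\le d(x,y)<\delta$, so $\iota(x_0)\le\iota(x)<\delta$ and there is $x_0'\in X\setminus\{x_0\}$ with $d(x_0,x_0')<\delta$. Applying the strong triangle inequality to the triples $\{x_0,x,y\}$, $\{x_0,x_0',x\}$, $\{x_0,x_0',y\}$ (in each of which one distance equals $\delta$ and another is strictly smaller) yields $d(x_0,y)=d(x_0',x)=d(x_0',y)=\delta$, and these relations, together with the choices already made, show that $x,y,x_0,x_0'$ are four distinct points. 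Thus $\{x,y,x_0,x_0'\}$ splits into the disjoint pairs $\{x,y\}$ and $\{x_0,x_0'\}$, both of distance $<\delta$, while its remaining four pairs all have distance $\delta$, which is therefore its diameter; by the Claim this subspace is weakly similar to $(X_4,d_4)$ or $(Y_4,\rho_4)$ — a contradiction. Hence $x_0$ satisfies condition~(ii), so $(X,d)\in{\bf US}$ by Theorem~\ref{xz}.

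The verification of the Claim and the triangle-inequality bookkeeping of the last step are routine. The step that I expect to carry the real weight is the production of the point $x_0$: for a general infinite ultrametric space $\inf_{z}\iota(z)$ need not be attained, and then no single point can play the role demanded by Theorem~\ref{xz}~(ii); the dichotomy ``$X$ finite or $X$ has a limit point'' is precisely what makes a minimiser of $\iota$ available. It is also worth noting that this single choice of $x_0$ then handles all $x$ uniformly, since whenever $d(x_0,x)=\iota(x_0)$ the desired inequality $d(x_0,x)\le d(y,x)$ is already forced by the minimality of $\iota(x_0)$, so no separate treatment of the ``closest'' points is needed.
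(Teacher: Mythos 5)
Your proof is correct and follows essentially the same route as the paper's: the same auxiliary function $\inf_{w\neq z} d(z,w)$, the same use of the finiteness/limit-point hypothesis to produce a minimiser, and the same extraction of two disjoint ``close pairs'' whose four cross-distances are forced by the strong triangle inequality to equal the diameter, yielding a subspace weakly similar to $(X_4,d_4)$ or $(Y_4,\rho_4)$. The only cosmetic differences are that you obtain the forward implication from Proposition~\ref{8866gh} together with the invariance of the Theorem~\ref{xz} condition under weak similarity, where the paper instead computes $d(x,y)\leq\max\{d(x,z),d(y,w)\}$ directly, and that you organise the converse around a fixed minimiser of $\iota$ rather than the paper's case split on whether $d(x,y)=\max\{l(x),l(y)\}$ holds for all pairs.
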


\begin{proof}
The theorem is trivially true if 
\begin{equation*}
    \operatorname{card} X\leq 3.
\end{equation*}

Let us consider the case when 
\begin{equation*}
    \operatorname{card} X\geq 4.
\end{equation*}

Suppose that $(X,d) \in \mathbf{US}$. Then, by Theorem~\ref{xz} there is a point $x_0 \in X$ such that $d(x,y) = \max\{d(x,x_0), d(y,x_0)\}$ for all $x,y \in X$. Let us define $l : X \to \mathbb{R}^+$ by $l(x) = d(x, x_0)$ for every $x\in X$. For all distinct $x,y,z,w \in X$ we have  
\begin{align*}
d(x,y) &=\max\{l(x), l(y)\} \leq \max\{l(x), l(y),l(z), l(w)\} 
\\
&=\max\{d(x,z), d(y,w)\}
\end{align*}
and, in particular, $\{x,y,z,w\}$ is not weakly similar to $(X_4,d_4)$ or $(Y_4,\rho_4)$.  

Now, suppose that $(X,d) \notin \mathbf{US}$. Define $l : X \to \mathbb{R}^+$ by  
\begin{equation*}
    l(x) =\inf\limits_{y \neq x} d(x,y).
\end{equation*}

If $d(x,y) = \max\{l(x), l(y)\}$ for all distinct $x, y \in X$, we can choose $x_0 \in X$ such that $l(x_0)$ is minimal (if $X$ has a limit point $x_0$, $l(x_0)=0$ is minimal) and then $(X,d) \in \mathbf{US}$ by Theorem~\ref{xz}. Therefore, there are distinct $x, y \in X$ such that 
\begin{equation*}
    d(x,y) \neq \max\{l(x), l(y)\}.
\end{equation*}
Clearly $\max\{l(x), l(y) \}\leq d(x,y)$ by  definition of $l$. Since $x \neq y$, we have $\max\{l(x), l(y) \}< d(x,y)$. Therefore, there are $z, w \in X$ such that $z \neq x$, $d(x,z) < d(x,y)$, $w \neq y$, and $d(y,w) < d(x,y)$. It is also clear that the relations $z \neq y$ and $w \neq x$ hold, and $z \neq w$ since otherwise we would have $d(x,y) > \max\{d(x,z), d(z,y)\}$. Hence, $x,y,z,w$ are distinct. We have 
\begin{equation*}
    d(x,y) = d(x,w) = d(z,y) = d(z,w)
\end{equation*}
 by the strong triangle inequality. It follows that $\{x,y,z,w\}$ is weakly similar to either $(X_4,d_4)$ (if $d(x,z) \neq d(y,w)$) or to $(Y_4,\rho_4)$ (if $d(x,z) = d(y,w)$).  

The proof is complete.
\end{proof}

The following lemma easily follows from  Theorem~4.1 of \cite{Petrov}, see also Theorem~4.4 and Proposition~4.1 in \cite{Dovgoshey-Kostikov-2023}.

\begin{lemma}\label{bvcio}
 Let $(X,d)$ be a four-point ultrametric space.
Then the following statements are equivalent:

\begin{enumerate}[label=\normalfont(\roman*), left=0pt] 
    \item 
$(X,d)$ is generated by labeled tree.

 \item There is a point $x_0 \in X$ such that
\begin{equation*}
d(x_0, x) = \max \{ d(p,q) : p,q \in X \}
\end{equation*}
for each $x \in X\setminus\{x_0\}$.
\end{enumerate}
\end{lemma}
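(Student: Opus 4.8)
The plan is to prove the two implications of Lemma~\ref{bvcio} separately and by elementary means (the statement can also be extracted from Theorem~4.1 of \cite{Petrov}, but I find the direct argument shorter). For \,(i)$\Rightarrow$(ii)\,, I would assume $X=V(T)$ and $d=d_l$ for a labeled tree $T(l)$, set $M:=\max_{v\in V(T)}l(v)$, and choose $x_0\in V(T)$ with $l(x_0)=M$. For every $x\in X\setminus\{x_0\}$ the path $P$ joining $x_0$ and $x$ in $T$ contains $x_0$, so \eqref{e1.1} gives $M=l(x_0)\le d_l(x_0,x)=\max_{w\in V(P)}l(w)\le M$, whence $d_l(x_0,x)=M$. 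Each value $d_l(p,q)$ is a maximum of labels and hence $\le M$, while it equals $M$ for the pair $x_0,x$; therefore $M=\max\{d(p,q):p,q\in X\}$ and (ii) holds with this $x_0$. (Note this half uses neither $\operatorname{card} X=4$ nor that $T$ is a star.)

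For \,(ii)$\Rightarrow$(i)\,, I would write $X=\{x_0,a,b,c\}$ with $d(x_0,x)=\delta$ for every $x\in X\setminus\{x_0\}$, where $\delta:=\max\{d(p,q):p,q\in X\}$. The three-point subspace $\{a,b,c\}$ is ultrametric, so after renaming its points one may assume $d(a,b)\le d(a,c)=d(b,c)$; put $m:=d(a,c)$, so $m\le\delta$ and $d(a,b)\le m$ (an equilateral sub-triangle being the case $d(a,b)=m=\delta$). Then I would take $T$ to be the star graph (Definition~\ref{wer}) with center $a$ and leaves $b,c,x_0$, with labeling $l\colon V(T)\to\mathbb{R}^+$ given by $l(a)=0$, $l(b)=d(a,b)$, $l(c)=m$, $l(x_0)=\delta$. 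Computing the six pairwise distances from \eqref{e1.1} and using $0<d(a,b)\le m\le\delta$ shows $d_l=d$ on $X$; since $\max\{l(u),l(v)\}>0$ on each of the three edges, $d_l$ is an ultrametric by Theorem~\ref{t1.4}, so $(X,d)$ is generated by $T(l)$.

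The forward implication is immediate, so the substance lies in (ii)$\Rightarrow$(i), and there the one point I expect to demand care is that the single star labeling above has to handle every $4$-point ultrametric space with property (ii) at once — equivalently, every $4$-point space whose partition into balls of radius $\delta$ contains a singleton block — including the degenerate configurations in which the sub-triangle $\{a,b,c\}$ collapses (two of its distances coinciding, or all three). That is exactly why the construction must be stated with the weak inequalities $0<d(a,b)\le m\le\delta$, and verifying that these boundary cases produce no unintended coincidence among the values $d_l(p,q)$ is the spot where I would proceed slowly. If one prefers a shorter route, one can instead note that (ii) is precisely the negation of "the radius-$\delta$ ball partition of $X$ has two two-point blocks" and invoke Theorem~4.1 of \cite{Petrov}.
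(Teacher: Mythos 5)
Your proof is correct, but it takes a genuinely different route from the paper: the paper does not prove Lemma~\ref{bvcio} at all, instead deriving it as an ``easy consequence'' of Theorem~4.1 of \cite{Petrov} (equivalently, of the structural results in \cite{Dovgoshey-Kostikov-2023}), which characterize finite ultrametric spaces generated by labeled trees via the block structure of the diametrical partition. Your argument is self-contained and elementary. The implication (i)$\Rightarrow$(ii) via a vertex of maximal label is exactly right and, as you note, needs neither $\operatorname{card}X=4$ nor the star hypothesis. In (ii)$\Rightarrow$(i) your explicit star labeling $l(a)=0$, $l(b)=d(a,b)$, $l(c)=m$, $l(x_0)=\delta$ with $0<d(a,b)\le m\le\delta$ checks out on all six pairs, the edge condition $\max\{l(u),l(v)\}>0$ holds, and the degenerate cases (equilateral sub-triangle, $m=\delta$) cause no collision. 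A bonus of your construction is that it produces a labeled \emph{star} graph, so it actually proves the stronger statement that condition (ii) forces $(X,d)\in{\bf US}$; this makes Corollary~\ref{aaattr} essentially immediate, whereas the paper obtains that corollary by combining the lemma with Theorem~\ref{eeerlm}. Your closing remark identifying (ii) with the absence of a $2+2$ diametrical partition is also the bridge back to the cited Theorem~4.1 of \cite{Petrov}, so the two routes are consistent.
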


Theorem \ref{eeerlm} and Lemma \ref{bvcio} give us the following.

\begin{corollary}\label{aaattr}
    Let $(X, d)$ be a finite ultrametric space with
\begin{equation}
\label{7_…47}
\operatorname{card} X \leq 4.
\end{equation}

Then the following statements are equivalent:  

\begin{enumerate}[label=\normalfont(\roman*), left=0pt]
    \item 
$(X, d) \in {\bf US}.$  

\item There is a labeled tree $T(l)$ such that $(X, d)$ is generated by $T(l)$.  

\end{enumerate}

\end{corollary}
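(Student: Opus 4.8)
The plan is to prove Corollary~\ref{aaattr} by combining Theorem~\ref{eeerlm} with Lemma~\ref{bvcio}, handling the degenerate cases $\operatorname{card} X \leq 3$ separately since both cited results are cleanest for $\operatorname{card} X = 4$. The implication $(i) \Rightarrow (ii)$ is immediate: every star graph is a tree, so if $(X,d) \in {\bf US}$ then $(X,d)$ is generated by a labeled star graph $S(l)$, which in particular is a labeled tree; no appeal to either Theorem~\ref{eeerlm} or Lemma~\ref{bvcio} is even needed here. So the content is entirely in $(ii) \Rightarrow (i)$.

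For $(ii) \Rightarrow (i)$, first I would dispose of the case $\operatorname{card} X \leq 3$, where $(X,d) \in {\bf US}$ always holds (any ultrametric space on at most three points can be realized as a labeled star graph: pick as center any point $x_0$ minimizing $d(x_0, \cdot)$ over the other points, and verify \eqref{zg}--\eqref{cd} directly, or invoke Theorem~\ref{xz}), so $(i)$ holds regardless of $(ii)$. For $\operatorname{card} X = 4$, suppose $(X,d)$ is generated by some labeled tree $T(l)$. By Lemma~\ref{bvcio}, statement $(ii)$ of that lemma holds: there is $x_0 \in X$ with $d(x_0, x) = \operatorname{diam}(X) := \max\{d(p,q) : p,q \in X\}$ for every $x \in X \setminus \{x_0\}$. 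I then want to show $(X,d)$ contains no four-point subspace weakly similar to $(X_4, d_4)$ or $(Y_4, \rho_4)$, so that Theorem~\ref{eeerlm} (applicable since $X$ is finite) yields $(X,d) \in {\bf US}$. The only four-point subspace of $X$ is $X$ itself, so I must check that $(X,d)$ itself is not weakly similar to either space in Figure~\ref{cis}.

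The crux is this last verification, and it is where I expect the main (though modest) obstacle to lie: I need to observe that in both $(X_4, d_4)$ and $(Y_4, \rho_4)$ there is \emph{no} single point at distance $\operatorname{diam}$ from all three other points. Inspecting Figure~\ref{cis}: in $(X_4, d_4)$ the diameter is $3$, and the edge labeled $2$ together with the edge labeled $1$ show that no vertex is at distance $3$ from every other vertex — e.g. the two endpoints of the ``$2$''-edge are at distance $2 < 3$ from each other, and similarly for the ``$1$''-edge, so at most two vertices could be universal-diameter points but they would have to be mutually at distance $3$, which fails. The same structural feature (a pair of vertices at distance $<\operatorname{diam}$, in fact two such pairs) holds in $(Y_4, \rho_4)$. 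Since weak similarity preserves the combinatorial pattern of which pairs achieve the maximal distance (a strictly increasing bijection between distance sets maps the maximum to the maximum), a space satisfying condition $(ii)$ of Lemma~\ref{bvcio} cannot be weakly similar to $(X_4,d_4)$ or $(Y_4,\rho_4)$: in such a space $x_0$ is at distance $\operatorname{diam}$ from all others, so \emph{every} pair containing $x_0$ is maximal, giving at least three maximal pairs through a common vertex — a pattern absent in both obstruction spaces.

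Thus the key steps, in order: (1) reduce to $\operatorname{card} X = 4$, settling $\operatorname{card} X \leq 3$ by hand; (2) note $(i) \Rightarrow (ii)$ trivially since stars are trees; (3) for $(ii) \Rightarrow (i)$, apply Lemma~\ref{bvcio} to get a diameter-realizing point $x_0$; (4) argue via the weak-similarity-invariance of the maximal-distance pattern that $(X,d)$ is not weakly similar to $(X_4,d_4)$ or $(Y_4,\rho_4)$; (5) conclude by Theorem~\ref{eeerlm}. Step (4) is the only one requiring genuine (if brief) argument; everything else is bookkeeping.
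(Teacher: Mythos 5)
Your proposal is correct and follows exactly the route the paper intends: the paper offers no written proof beyond the remark that Theorem~\ref{eeerlm} and Lemma~\ref{bvcio} combine to give the corollary, and your argument is precisely the correct fleshing-out of that combination. In particular, your step (4) --- that a strictly increasing bijection between distance sets preserves the diameter-pair pattern, while neither $(X_4,d_4)$ nor $(Y_4,\rho_4)$ has a vertex realizing the diameter against all three others --- is the genuine content linking the two cited results, and you handle the $\operatorname{card} X \leq 3$ cases correctly.
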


The next example shows that the number $4$ is the best possible constant in inequality \eqref{7_…47}.

\begin{example}
The five-point ultrametric spaces generated by labeled trees $T_1(l_1)$ and $T_2(l_2)$ (see Figure~\ref{dftthh} below) do not belong to ${\bf US}$.
\end{example}

\begin{figure}[H]
\centering
\begin{center}
    \begin{tikzpicture}[scale=1, every node/.style={circle, draw, minimum size=5mm}]
        \node (A) at (0,2) {2};
        \node (B) at (1.5,2) {2};
        \node (C) at (3,2) {3};
        \node (D) at (4.5,2) {1};
        \node (E) at (6,2) {1};

        \draw (A) -- (B) -- (C) -- (D) -- (E);

        \node[draw=none] at (8,2) {$T_1(l_1)$};

        \node (F) at (0,0) {2};
        \node (G) at (1.5,0) {2};
        \node (H) at (3,0) {3};
        \node (I) at (4.5,0) {2};
        \node (J) at (6,0) {2};

        \draw (F) -- (G) -- (H) -- (I) -- (J);

        \node[draw=none] at (8,0) {$T_2(l_2)$};

    \end{tikzpicture}
\end{center}
\caption{The ultrametric space $(V(T_1), d_{l_1})$ contains four-point subspace which is isometric to $(X_4, d_{4})$.  
Similarly, $(V(T_2), d_{l_2})$ contains a subspace isometric to $(Y_4, \rho_{4})$.}
\label{dftthh}
\end{figure}
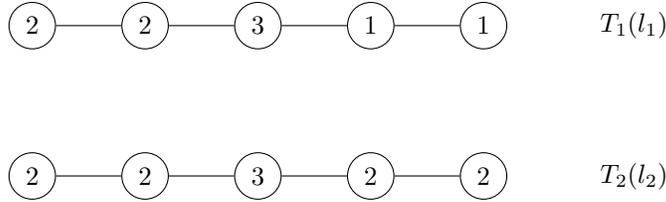

The next result is a semimetric modification of Theorem~\ref{eeerlm}.

\begin{theorem}\label{333sss}
    Let $(X, d)$ be a finite semimetric space with
    \begin{equation}\label{njjcexa}
        \operatorname{card}X\neq 3.
    \end{equation}
    Then the following statements are equivalent:

\begin{enumerate}[label=\normalfont(\roman*), left=0pt]
    \item 
$(X, d) \in {\bf US}$. 

\item
Each four-point subspace of $(X, d)$ is weakly similar to an ${\bf US}$-space.  

\item Each four-point subspace of $(X, d)$ is weakly similar to an ultrametric space generated by a labeled tree.
\end{enumerate}

\end{theorem}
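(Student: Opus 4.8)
The plan is to establish the cycle of implications $(i)\Rightarrow(iii)\Rightarrow(ii)\Rightarrow(i)$, separating the two exceptional cardinalities $\operatorname{card}X\in\{1,2\}$ (where all three statements hold trivially, since a space with at most two points is an ${\bf US}$-space and has no four-point subspace) from the main case $\operatorname{card}X\ge 4$.

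\smallskip

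For $(i)\Rightarrow(iii)$, assume $(X,d)\in{\bf US}$. By Proposition~\ref{8866gh}, every finite nonempty subset of $X$ — in particular every four-point subset — inherits an ${\bf US}$-structure, hence is generated by a labeled star graph, which is a labeled tree. This is the easy direction. For $(iii)\Rightarrow(ii)$ I would invoke Lemma~\ref{bvcio} together with Corollary~\ref{aaattr}: a four-point ultrametric space generated by a labeled tree is already an ${\bf US}$-space, so "weakly similar to a space generated by a labeled tree" forces (via Proposition~\ref{ooyggr}, which keeps us inside ultrametric spaces) "weakly similar to an ${\bf US}$-space." Here one must be slightly careful: a priori a four-point subspace of a \emph{semimetric} space $(X,d)$ need not be ultrametric, but if it is weakly similar to an ultrametric space then Proposition~\ref{ooyggr} shows it \emph{is} ultrametric, and then Corollary~\ref{aaattr} applies verbatim.

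\smallskip

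The substantive implication is $(ii)\Rightarrow(i)$, and this is where I expect the main obstacle. The strategy is contrapositive and parallels the proof of Theorem~\ref{eeerlm}, but now we work only with the semimetric hypothesis and must produce a \emph{concrete} four-point subspace failing to be weakly similar to any ${\bf US}$-space. Since weak similarity to an ${\bf US}$-space is equivalent, by Theorem~\ref{eeerlm} applied in the four-point case (equivalently by Corollary~\ref{aaattr} and Proposition~\ref{ooyggr}), to \emph{not} being weakly similar to $(X_4,d_4)$ or $(Y_4,\rho_4)$, it suffices to show: if $(X,d)\notin{\bf US}$ then some four-point subspace \emph{is} weakly similar to $(X_4,d_4)$ or $(Y_4,\rho_4)$. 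One first checks that $(X,d)$ must itself be ultrametric — if every four-point subspace is weakly similar to an ${\bf US}$-space then every four-point subspace is ultrametric, and for $\operatorname{card}X\ge4$ this propagates the strong triangle inequality to all triples, making $(X,d)$ ultrametric (the case $\operatorname{card}X\le 2$ is trivial and $\operatorname{card}X=3$ is excluded by \eqref{njjcexa}, which is precisely why that exclusion is needed — a three-point semimetric space has no four-point subspace to constrain it). Once $(X,d)$ is known to be ultrametric, the finite-$X$ half of Theorem~\ref{eeerlm} gives exactly the desired four-point subspace.

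\smallskip

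Thus the real content is the lemma that for a finite semimetric space with $\operatorname{card}X\neq 3$, having every four-point subspace ultrametric implies the whole space is ultrametric. The hard part will be handling triples: from a potential violation $d(a,b)>\max\{d(a,c),d(c,b)\}$ one needs a fourth point $w$ to embed $\{a,b,c\}$ into a non-ultrametric four-point set, contradicting the hypothesis — and such a $w$ exists precisely because $\operatorname{card}X\ge 4$. Assembling this carefully, and verifying that the four-point set $\{a,b,c,w\}$ is genuinely non-ultrametric regardless of the distances involving $w$, is the step that requires attention; everything else then follows by citing Theorem~\ref{eeerlm}, Corollary~\ref{aaattr}, Proposition~\ref{8866gh}, and Proposition~\ref{ooyggr}.
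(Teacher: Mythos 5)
Your proposal is correct and follows essentially the same route as the paper: reduce via Proposition~\ref{8866gh}, Corollary~\ref{aaattr} and Proposition~\ref{ooyggr} to the observation that for $\operatorname{card}X\neq 3$ ultrametricity of all four-point subspaces forces ultrametricity of $(X,d)$, and then invoke Theorem~\ref{eeerlm}. The one step you flag as delicate --- that $\{a,b,c,w\}$ is non-ultrametric whenever the triple $\{a,b,c\}$ violates the strong triangle inequality --- is immediate, since the ultrametric property is hereditary.
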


\begin{proof}
 (i) $\Rightarrow$ (ii). The validity of this implication follows from Proposition \ref{8866gh}.

 (ii) $\Rightarrow$ (iii). Each labeled star graph is a labeled tree. 
Thus, implication (ii) $\Rightarrow$ (iii) 
is true.

 (iii) $\Rightarrow$ (i). Let (iii) hold. It follows from Definition \ref{wdvthk} and condition~\eqref{njjcexa} that  the semimetric space $(X,d)$ is ultrametric iff every four-point subspace of $(X,d)$ is ultrametric.

Therefore, by Proposition \ref{ooyggr}, each of statements (ii) and (iii) implies that $(X,d)$ is ultrametric. Now, using Corollary~\ref{aaattr}, we see that statements (ii) and (iii) are logically equivalent.

To complete the proof, it suffices to note that, for the ultrametric space $(X,d)$, statement (ii) implies statement (i) by Theorem~\ref{eeerlm}.

\end{proof}

The following example shows that restriction~\eqref{njjcexa} is really necessary.

\begin{example}
    Let $(X,d)$ be a three-point space consisting of the vertices of a right triangle on the Euclidean plane. Then statements (ii) and (iii) of Theorem~\ref{333sss} are vacuously true, but statement (i) of this theorem is false.
\end{example}

\section{Compact {\bf US}-spaces}

Let us start with a characterization of infinite
labeled star graphs $S(l)$ which generate compact {\bf US}-spaces.

\begin{theorem}\label{qyuiz}
Let $(X,d)$ be an infinite $\bf US$-space generated by labeled star graph $S(l)$ with a center $c$. Then the following statements are equivalent:
\begin{enumerate}[label=\normalfont(\roman*), left=0pt] 
    \item $(X,d)$ is compact.
    
    \item The equality 
    \begin{equation}
    \label{rvyhkij}
        l(c) = 0
    \end{equation}
    holds and the set 
    \begin{equation}
    \label{wswsolk}
        A_\varepsilon := \{ x \in X : l(x) \geq \varepsilon \}
    \end{equation}
    is finite for each $\varepsilon > 0$.
\end{enumerate}

\end{theorem}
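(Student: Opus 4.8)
The plan is to reduce everything to the sequential description of compactness in Proposition~\ref{graa}, after first recording the explicit form of the generating ultrametric. Since the only edges of the star graph $S$ are the pairs $\{c,v\}$ with $v\neq c$, formula~\eqref{e1.1} gives $d(c,x)=\max\{l(c),l(x)\}$ for every $x\in X\setminus\{c\}$, and $d(x,y)=\max\{l(c),l(x),l(y)\}$ for all distinct $x,y\in X\setminus\{c\}$ (the path joining $x$ and $y$ in $S$ passes through $c$). I would state these two identities at the outset and use them in both directions.

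For (i)$\Rightarrow$(ii) I would argue by contraposition in two independent parts. If $l(c)=\delta>0$, the identities show $d(x,y)\geq\delta$ for every pair of distinct points, so $(X,d)$ is uniformly discrete; as $X$ is infinite, any sequence of pairwise distinct points has all pairwise distances $\geq\delta$, hence admits no Cauchy — in particular no convergent — subsequence, so $(X,d)$ is not compact. Thus compactness forces $l(c)=0$. Next, if $A_\varepsilon$ is infinite for some $\varepsilon>0$, choose a sequence $(x_n)$ of pairwise distinct points of $A_\varepsilon$; the identities give $d(x_m,x_n)\geq\varepsilon$ whenever $m\neq n$, so again no subsequence converges, contradicting compactness. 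Hence every $A_\varepsilon$ is finite.

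For (ii)$\Rightarrow$(i) I would take an arbitrary infinite sequence $(x_n)$ in $X$. Either some point of $X$ occurs infinitely often among the $x_n$, and then a constant subsequence converges to that point; or every point of $X$ occurs only finitely often, and then one can extract a subsequence $(x_{n_k})$ of pairwise distinct terms. In the latter case the hypothesis $l(c)=0$ gives $d(x_{n_k},c)=l(x_{n_k})$, and $l(x_{n_k})\to 0$, since otherwise some $\varepsilon>0$ would be reached by infinitely many of the distinct points $x_{n_k}$, making $A_\varepsilon$ infinite. Hence $x_{n_k}\to c\in X$. In every case the sequence has a subsequence converging to a point of $X$, so $(X,d)$ is compact by Proposition~\ref{graa}.

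The whole argument is short once the two distance identities are written down, so I do not anticipate a genuine obstacle; the only place calling for a little care is the reduction in (ii)$\Rightarrow$(i) to a constant-or-injective subsequence, together with the accompanying remark that a sequence of pairwise distinct points in a ${\bf US}$-space with $l(c)=0$ can only converge to the center. This is precisely the step where $l(c)=0$ is really used, since otherwise $d(\cdot,c)$ stays bounded away from $0$ and no nonconstant sequence can converge at all.
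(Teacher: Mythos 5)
Your argument is correct and follows essentially the same route as the paper: both directions reduce to the sequential characterization of compactness in Proposition~\ref{graa}, with (i)$\Rightarrow$(ii) proved by the identical contrapositive (pairwise distances bounded below by $l(c)$, respectively by $\varepsilon$ on an infinite $A_\varepsilon$), and (ii)$\Rightarrow$(i) by showing that any sequence with infinite range has a subsequence converging to the center $c$. The paper's version of (ii)$\Rightarrow$(i) additionally enumerates $X\setminus\{c\}$ as a sequence with decreasing labels, but that extra structure is not needed for this theorem and your leaner constant-or-injective dichotomy covers the same ground.
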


\begin{proof}
    
(i)$ \Rightarrow$ (ii).  Let $(X,d)$ be compact.
If \eqref{rvyhkij} does not hold, then we have
\begin{equation*}
    l(c)=t_0
\end{equation*}
for some $t_0>0$.

Definition \ref{wer} and formula \eqref{e1.1} with $T(l) = S(l)$ give us the inequality
\begin{equation}
\label{gbuefi}
    d_l(u,v) \geq t_0 
\end{equation}
for all distinct $u,$ $v \in X$.

Let $(u_n)_{n \in \mathbb{N}}$ be an arbitrary sequence of distinct points of $X$. Since \eqref{gbuefi} holds for all distinct $u,$ $v \in X$, the sequence $(u_n)_{n \in \mathbb{N}}$ has no convergent subsequence.
Thus $(X,d)$ is not compact by Proposition \ref{graa}.

Suppose now that the set $A_{\varepsilon_0}$ is infinite for some $\varepsilon_0 > 0$. Then there is a sequence $(a_n)_{n \in \mathbb{N}}$ of distinct points $a_n \in A$. As above, using formula  \eqref{e1.1} and Definition  \ref{wer} we see that the inequality
\begin{equation*}
    d(u_n, u_m) \geq \varepsilon_0
\end{equation*}
holds for all distinct $n,$ $m \in \mathbb{N}$.
Thus, $(u_n)_{n \in \mathbb{N}}$ has no convergent subsequence, and, consequently, $(X,d)$ is not compact
by Proposition \ref{graa}.

(ii)$ \Rightarrow$ (i).  Let (ii) hold and let $ X_0 := X\setminus \{c\}$. Then equality \eqref{rvyhkij} and Theorem \ref{t1.4}
imply the inequality
\begin{equation}
\label{998dde}
l(x) > 0 \quad 
\end{equation}
for every $x \in X_0$. Let $(\varepsilon_n)_{n \in \mathbb{N}}$ 
be a strictly decreasing sequence of positive real numbers such that
\begin{equation}
\label{strict}
\lim\limits_{n \to \infty} \varepsilon_n = 0.
\end{equation}
Then from limit relation \eqref{strict} and inequality \eqref{998dde} follows
the equality
\begin{equation}
\label{23_56[7}
X_0 = \bigcup\limits_{n \in \mathbb{N}} A_{\varepsilon_n}. 
\end{equation}
Now using the finiteness of $A_{\varepsilon}$ and formulas
 \eqref{wswsolk}, \eqref{23_56[7} we see that $X_0$ is countably infinite 
and there is a bijection between $\mathbb N$ and $X_0$ given by a sequence $(x_n)_{n \in \mathbb{N}}$ such that
\begin{equation}
\label{777}
\lim\limits_{n \to \infty} l(x_n) = 0
\end{equation}
and 
\begin{equation}
\label{888}
0 < l(x_{n+1}) \leq l(x_n)
\end{equation}
for each $ n \in \mathbb{N}$.
Now, using \eqref{e1.1} and \eqref{rvyhkij} we obtain the equality
\begin{equation}
\label{8881}
d(c, x_n) = l(x_n)
\end{equation}
for every $ n \in \mathbb{N}$. Equalities \eqref{777} and \eqref{8881} give us
\begin{equation*}
    \lim\limits_{n \to \infty} d(c, x_n) = 0.
\end{equation*}
Thus $(x_n)_{n \in \mathbb{N}}$ is a convergent sequence in $(X, d)$.

Let us consider an arbitrary sequence $(y_m)_{m \in \mathbb{N}}$ of points of $X$.
According to Proposition \ref{graa}, to complete 
the proof if is sufficient to verify that $(y_m)_{m \in \mathbb{N}}$ contains a convergent subsequence.
The existence of such subsequence is obvious if the range of $(y_m)_{m \in \mathbb{N}}$ is finite.
Otherwise, using the finiteness of the set $A_{\varepsilon}$ for all $\varepsilon>0$, there exists a subsequence $(y_{m_k})_{k \in \mathbb{N}}$  of $(y_m)_{m \in \mathbb{N}}$ such that
\begin{equation*}
d(c, y_{m_k}) = l(y_{m_k})<\varepsilon_k
\end{equation*}
for all $k \in \mathbb{N}$. Equality \eqref{strict} then implies that
\begin{equation*}
    \lim\limits_{n \to \infty} d(c, y_{m_k}) = 0.
\end{equation*}
Thus $(y_{m_k})_{k \in \mathbb{N}}$ is a convergent subsequence of $(y_m)_{m \in \mathbb{N}}$.

The proof is completed.

\end{proof}

    



The following theorem is a corollary of Theorem \ref{eeerlm}.

\begin{theorem}\label{siti}
Let $(X,d)$ be a compact ultrametric space. Then $(X,d)$ is an $\bf US$-space if and only if $(X,d)$ contains no four-point subspace which is weakly similar to $(X_4, d_4)$ or to $(Y_4, \rho_4)$.
\end{theorem}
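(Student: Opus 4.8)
The plan is to derive Theorem~\ref{siti} from Theorem~\ref{eeerlm} by checking that a \emph{compact} ultrametric space satisfies the hypothesis of Theorem~\ref{eeerlm}, namely that it is either finite or has a limit point. Indeed, if $(X,d)$ is compact and infinite, then $X$ contains an infinite sequence of pairwise distinct points; by Proposition~\ref{graa} this sequence has a subsequence converging to some $a \in X$, and by passing to that subsequence we obtain a sequence of distinct points of $X$ converging to $a$, so $a$ is a limit point of $X$. Hence every compact ultrametric space falls under the scope of Theorem~\ref{eeerlm}.

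Once this observation is in place, the equivalence is immediate: by Theorem~\ref{eeerlm}, for a space that is finite or has a limit point, membership in $\mathbf{US}$ is equivalent to containing no four-point subspace weakly similar to $(X_4,d_4)$ or to $(Y_4,\rho_4)$. Applying this to the compact space $(X,d)$ gives exactly the asserted equivalence, with no further work needed.

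The only point requiring a word of care is the degenerate case $\operatorname{card} X \le 3$, but this is handled inside Theorem~\ref{eeerlm} already (and in any case such spaces are trivially $\mathbf{US}$-spaces and contain no four-point subspace at all), so it needs no separate treatment here. I do not anticipate a genuine obstacle: the entire content is the reduction ``compact $\Rightarrow$ finite or possessing a limit point,'' which is a one-line consequence of sequential compactness as recorded in Proposition~\ref{graa}. The proof will therefore be short, essentially a citation of Theorem~\ref{eeerlm} preceded by that reduction.
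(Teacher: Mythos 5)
Your proposal is correct and follows the same route as the paper: both reduce Theorem~\ref{siti} to Theorem~\ref{eeerlm} by observing that a compact ultrametric space is either finite or (being infinite and sequentially compact by Proposition~\ref{graa}) has a limit point. Your write-up merely makes this one-line reduction slightly more explicit than the paper's.
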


\begin{proof}
     Theorem \ref{eeerlm} holds for every nonempty ultrametric space, except the infinite discrete spaces. The compact space $(X, d)$ cannot be infinite and discrete. Thus, Theorem 4.3 gives us the desired result.

\end{proof}

The next theorem shows that Conjecture~\ref{tghjnk} is true.

\begin{theorem}\label{wop}
\label{tghjnk1}
    Let $(X,d)$ be an infinite ${\bf US}$-space generated by labeled star graph $S(l)$ with a center $c$,
    let $X_0 := X \setminus \{c\}$, and let $d_0$ be the restriction of $d$ on the set $X_0\times X_0$.
    Then the following statements are equivalent:
\begin{enumerate}[label=\normalfont(\roman*), left=0pt] 
    \item $(X, d)$ is compact.

   \item $(X_0, d_0)$ is generated by labeled ray $R(l^*)$, $R=(x_1,x_2,\ldots, x_n,\ldots),$
    such that
 \begin{equation*}
        \lim\limits_{n \to \infty} l^*(x_n) = 0
    \end{equation*}
and 
    \begin{equation*}
        l^*(x_n) \geq l^*(x_{n+1}) > 0
    \end{equation*}
    for every $n \in \mathbb{N}$.
    \end{enumerate}
\end{theorem}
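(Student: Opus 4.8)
The plan is to deduce Theorem~\ref{wop} from Theorem~\ref{qyuiz}, using Proposition~\ref{fraas} in the reverse direction to recognize when a space is in $\mathbf{US}$, and to do the bookkeeping that converts a labeled star graph into a labeled ray and back.

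\medskip

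\noindent\textbf{Direction (i) $\Rightarrow$ (ii).} Assume $(X,d)$ is compact. By Theorem~\ref{qyuiz}, $l(c)=0$ and each set $A_\varepsilon=\{x\in X: l(x)\ge\varepsilon\}$ is finite. As in the proof of the implication (ii) $\Rightarrow$ (i) of Theorem~\ref{qyuiz}, equality $l(c)=0$ together with Theorem~\ref{t1.4} gives $l(x)>0$ for every $x\in X_0$, and the finiteness of the $A_\varepsilon$'s forces $X_0$ to be countably infinite and allows us to enumerate $X_0=\{x_1,x_2,\ldots\}$ so that $l(x_n)\ge l(x_{n+1})>0$ for all $n$ and $\lim_{n\to\infty}l(x_n)=0$ (this is precisely \eqref{777}--\eqref{888} in that proof; concretely, list the finitely many points of $A_{\varepsilon_1}$ in nonincreasing order of label, then the new points of $A_{\varepsilon_2}$, and so on). Now form the ray $R=(x_1,x_2,\ldots)$ with edge set $\{\{x_n,x_{n+1}\}\}$, and let $l^*$ be the restriction of $l$ to $X_0$. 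It remains to check that $d_{l^*}=d_0$, i.e.\ that for distinct $x_i,x_j\in X_0$ with $i<j$ one has $\max\{l(x_i),l(x_{i+1}),\ldots,l(x_j)\}=d(x_i,x_j)$. But the left side equals $l(x_i)$ by monotonicity, while from \eqref{e1.1} applied to $S(l)$ and $l(c)=0$ we get $d(x_i,x_j)=\max\{l(c),l(x_i),l(x_j)\}=\max\{l(x_i),l(x_j)\}=l(x_i)$, again by monotonicity. Hence $d_0=d_{l^*}$ and (ii) holds.

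\medskip

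\noindent\textbf{Direction (ii) $\Rightarrow$ (i).} Assume $(X_0,d_0)$ is generated by a ray $R(l^*)$ with $l^*(x_n)\ge l^*(x_{n+1})>0$ and $l^*(x_n)\to0$. For distinct $x_i,x_j\in X_0$, the path in $R$ between them runs through consecutive vertices, so $d_0(x_i,x_j)=\max\{l^*(x_i),l^*(x_j)\}$ by monotonicity. I claim $l(c)=0$: for any $x_i\in X_0$ and any $j\ne i$ we have $d(c,x_i)=\max\{l(c),l(x_i)\}$ and, by the strong triangle inequality together with Theorem~\ref{xz} applied with the center $c$ playing the role of $x_0$, $d(c,x_i)\le d(x_j,x_i)=\max\{l^*(x_i),l^*(x_j)\}$; letting $j\to\infty$ (so $l^*(x_j)\to0$) and choosing $i$ with $l^*(x_i)$ small shows $\inf_{i}d(c,x_i)=0$, hence $l(c)=0$ since $d(c,x_i)\ge l(c)$ for all $i$. (Alternatively one observes directly that $d_0(x_i,x_j)=\max\{l(c),l^*(x_i),l^*(x_j)\}$ must reduce to $\max\{l^*(x_i),l^*(x_j)\}$ for all large $i,j$, forcing $l(c)=0$.) With $l(c)=0$ established, formula \eqref{e1.1} gives $l(x)=d(c,x)$ for $x\in X_0$, so $l(x)=l^*(x)$ on $X_0$; therefore, for each $\varepsilon>0$, the set $A_\varepsilon=\{x\in X_0: l^*(x)\ge\varepsilon\}$ is finite because $l^*(x_n)\to0$. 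By Theorem~\ref{qyuiz}, $(X,d)$ is compact.

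\medskip

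\noindent\textbf{Main obstacle.} The only genuinely delicate point is pinning down $l(c)=0$ in the direction (ii) $\Rightarrow$ (i): the hypothesis only gives us data about $d_0$ on $X_0\times X_0$, and we must rule out a positive label at the center. The clean way is to use the identity $d_0(x_i,x_j)=\max\{l(c),l^*(x_i),l^*(x_j)\}$ (valid by \eqref{e1.1} for the star $S(l)$) and to compare it with the ray-generated value $\max\{l^*(x_i),l^*(x_j)\}$; since $l^*(x_i),l^*(x_j)$ can be made arbitrarily small while the equality must persist, $l(c)$ is forced to be $0$. Everything else is the routine translation between the star labeling and the ray labeling, which works because along a decreasing ray the maximum label on any path between two vertices is simply the label of the earlier one.
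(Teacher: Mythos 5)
Your proof is correct, and it reaches the theorem by a somewhat different route than the paper. The paper's proof is self-contained: it introduces an auxiliary condition (iii), namely that $\{x\in X: d(x,c)\ge\varepsilon\}$ is finite for every $\varepsilon>0$, proves (i) and (ii) each equivalent to (iii), and in the step (iii)\,$\Rightarrow$\,(ii) builds the enumeration of $X_0$ greedily (at each stage picking a remaining point maximizing $d(\cdot,c)$). You instead pivot through Theorem~\ref{qyuiz}; since $d(x,c)=\max\{l(c),l(x)\}$, your pivot condition ``$l(c)=0$ and every $A_\varepsilon$ finite'' is equivalent to the paper's (iii), so the underlying idea is the same, but your version avoids re-deriving work already done in Theorem~\ref{qyuiz} and replaces the greedy construction by the layer-by-layer enumeration via the sets $A_{\varepsilon_n}$ from that proof. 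What the paper's route buys is independence from Theorem~\ref{qyuiz}; what yours buys is brevity and a cleaner separation of concerns. One small gloss in your (ii)\,$\Rightarrow$\,(i): the step ``$l(x)=d(c,x)$, so $l(x)=l^*(x)$ on $X_0$'' deserves a line — from $l(c)=0$ and \eqref{e1.1} one gets $l^*(x_i)=d_0(x_i,x_j)=\max\{l(x_i),l(x_j)\}$ for all $j>i$, whence $d(c,x_i)=l(x_i)\le l^*(x_i)$, and this inequality alone already gives the finiteness of $A_\varepsilon$ (the full equality $l=l^*$ requires additionally letting $j\to\infty$ and using $l(x_i)>0$ from Theorem~\ref{t1.4}); this is a presentational gap only, not a mathematical one.
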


\begin{proof}

    We will prove that conditions (i) and (ii) are both equivalent to a third condition (iii), which states that the set $\{x \in X : d(x,c) \geq \varepsilon\}$
    is finite 
    for every  $\varepsilon > 0$.  

Suppose that condition (iii) does not hold. Then there are a positive number $\varepsilon > 0$ and an infinite sequence $(x_n)_{n\in \mathbb N}$ of distinct points $x_n \in X$ such that $d(x_n,c) \geq \varepsilon$ for all $n \in \mathbb{N}$. It follows  from formula \eqref{e1.1} with $T(l)=S(l)$ that $d(x_n,y) \geq \varepsilon$ holds for all $n \in \mathbb{N}$ and $y \in X\setminus \{x_n\}$. Thus the sequence $(x_n)_{n\in \mathbb N}$
has no convergent subsequence, so condition (i) does not hold by Proposition \ref{graa}.

Suppose that condition (ii) holds. Let $\varepsilon > 0$. Then for all but finitely many $n \in \mathbb{N}$ we have $l^*(x_n) < \varepsilon$, since $\lim\limits_{n \to \infty} l^*(x_n) = 0$. It follows from Proposition \ref{fraas} and (ii) imply that $d(x_n,c) \leq d(x_n,x_{n+1}) = l^*(x_n)$ for all $n \in \mathbb{N}$, so condition (iii) holds.  

Suppose that condition (iii) holds. Every open ball around $c$ is cofinite, so every open set containing $c$ is cofinite. This easily implies condition (i). Every $Y \subseteq X$ also satisfies condition (iii), so for given  $y_* \in Y\setminus \{c\}$ the set   $\{y \in Y : d(y,c) \geq d(y_*,c)\}$ is finite. This implies that if $\emptyset \neq Y \subseteq X$, then there exists $y \in Y$ such that $d(y,c)$ is maximal.  
We define a sequence $x_n$ of points in $X_0$ inductively. For all $n \in \mathbb{N}$, let $x_n$ be a point $x \in X_0 \setminus \{x_1, \ldots, x_{n-1}\}$ such that $d(x,c)$ is maximal ($X_0 \setminus \{x_1, \ldots, x_{n-1}\} \neq \emptyset$, since $X$ is infinite). This is a sequence of distinct points by construction, so in particular its range is infinite.  
Given $x \in X_0$, the set 
$\{y \in X : d(y,c) \geq d(x,c)\}$ is finite, so $d(x_n,c) < d(x,c)$ for some $n \in \mathbb{N}$. Hence $x \in \{x_1, \ldots, x_{n-1}\}$ by the definition of $x_n$. Therefore, the equality $X_0 = \{x_n :n \in \mathbb{N}\}$ holds. Let us define $l^* : X_0 \to \mathbb{R}^+$ by $l^*(x) = d(x,c)$. We have the inequality $l^*(x_n) \geq l^*(x_{n+1})$ by the definition of $x_n$, and  $
    \lim\limits_{n \to \infty} l^*(x_n) = 0
$
follows from condition (iii) since the points $x_n$ are pairwise distinct. If $m < n$, then $d(x_m, x_n) = \max\{l^*(x_m), l^*(x_n)\}$ since $(X,d)\in \mathbf{US}$, so since $(l^*(x_n))_{n\in \mathbb N}$ is a decreasing sequence, we have  
$
    d(x_m, x_n) = \max\limits_{m \leq i \leq n} l^*(x_i).
$
Therefore, the labeled ray $R(l^*)$ generates $(X_0, d_0)$, so condition (ii) holds. 

The proof is completed.

\end{proof}

The following lemma is a reformulation of Proposition 3.6 from \cite{Dovgoshey-Vito}.

\begin{lemma}\label{erra1}
 Let $R = (x_1, x_2, \dots)$ be a ray with labeling $l^*: V(R)\to {\mathbb R}^+$ and let $(V({R}), d_{l^*})$ be an ultrametric space generated by labeling $R(l^*)$. Then
the following statements are equivalent:
\begin{enumerate}[label=\normalfont(\roman*), left=0pt] 
    \item The sequence $(x_n)_{n \in \mathbb{N}}$ is a Cauchy sequence in $(V({R}), d_{l^*})$.
    
    \item Each infinite sequence of points of $(V({R}), d_{l^*})$ contains a Cauchy subsequence.
\end{enumerate}

\end{lemma}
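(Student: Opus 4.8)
The plan is to reduce both statements to the single scalar condition $\lim_{n\to\infty} l^*(x_n) = 0$. First I would record the explicit form of the generated ultrametric: since the path in $R$ joining $x_i$ and $x_j$ with $i \le j$ is $x_i, x_{i+1}, \dots, x_j$, formula \eqref{e1.1} gives $d_{l^*}(x_i, x_j) = \max_{i \le k \le j} l^*(x_k)$. From this I would extract the key elementary equivalence: the sequence $(x_n)_{n\in\mathbb N}$ is a Cauchy sequence in $(V(R), d_{l^*})$ if and only if $\lim_{n\to\infty} l^*(x_n) = 0$, equivalently $\sup_{k \ge n} l^*(x_k) \to 0$ as $n \to \infty$. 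Indeed $l^*(x_n) \le d_{l^*}(x_n, x_{n+1})$ yields one direction, while $d_{l^*}(x_m, x_n) \le \sup_{k \ge \min(m,n)} l^*(x_k)$ yields the converse.

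For the implication (i) $\Rightarrow$ (ii), assume $\lim_n l^*(x_n) = 0$ and let $(y_m)_{m\in\mathbb N}$ be an arbitrary infinite sequence of points of $V(R)$. If the range of $(y_m)_{m\in\mathbb N}$ is finite, some value occurs infinitely often and the corresponding constant subsequence is trivially Cauchy. If the range is infinite, I would first choose a subsequence $(y_{m_k})_{k\in\mathbb N}$ of pairwise distinct points, write $y_{m_k} = x_{i_k}$ with the $i_k$ pairwise distinct positive integers, and then pass to a further subsequence along which the indices $i_k$ are strictly increasing. For such a subsequence and $k < l$ we get $d_{l^*}(x_{i_k}, x_{i_l}) = \max_{i_k \le j \le i_l} l^*(x_j) \le \sup_{j \ge i_k} l^*(x_j) \to 0$ as $k \to \infty$, so it is a Cauchy subsequence of $(y_m)_{m\in\mathbb N}$.

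For the implication (ii) $\Rightarrow$ (i), I would apply statement (ii) to the sequence $(x_n)_{n\in\mathbb N}$ itself, obtaining a Cauchy subsequence $(x_{n_k})_{k\in\mathbb N}$. Then $d_{l^*}(x_{n_k}, x_{n_{k+1}}) \to 0$ as $k \to \infty$, and since $d_{l^*}(x_{n_k}, x_{n_{k+1}}) = \max_{n_k \le j \le n_{k+1}} l^*(x_j) \ge l^*(x_j)$ for every $j$ with $n_k \le j \le n_{k+1}$, while the intervals $[n_k, n_{k+1}]$ together cover all integers $\ge n_1$, it follows that $\lim_{j\to\infty} l^*(x_j) = 0$. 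By the equivalence from the first paragraph, $(x_n)_{n\in\mathbb N}$ is Cauchy. No serious obstacle is expected; the only points needing a little care are the double passage to a subsequence (first to distinct terms, then to strictly increasing indices) in the infinite-range case of (i) $\Rightarrow$ (ii), and the covering-by-intervals observation in (ii) $\Rightarrow$ (i) that lets a single Cauchy subsequence force $l^*(x_j) \to 0$ along the entire ray. One may alternatively phrase the lemma through Proposition~\ref{erra}, noting that statement (ii) is precisely the assertion that $(V(R), d_{l^*})$ is totally bounded, so the lemma says that total boundedness of a labeled-ray space is detected by its defining sequence.
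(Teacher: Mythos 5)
Your proof is correct. Note that the paper itself gives no argument for this lemma: it is stated as a reformulation of Proposition~3.6 of \cite{Dovgoshey-Vito}, so there is no in-paper proof to compare against. Your self-contained argument---reducing both statements to the single condition $\lim_{n\to\infty} l^*(x_n)=0$ via the explicit formula $d_{l^*}(x_i,x_j)=\max_{i\le k\le j} l^*(x_k)$, handling the finite-range and infinite-range cases separately in (i)~$\Rightarrow$~(ii), and using the covering of $\{n : n\ge n_1\}$ by the intervals $[n_k,n_{k+1}]$ in (ii)~$\Rightarrow$~(i)---is sound in every step, and your closing observation that (ii) is exactly total boundedness of $(V(R),d_{l^*})$ by Proposition~\ref{erra} is precisely how the lemma is used later in the proof of Theorem~\ref{sofas}.
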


The next lemma is a direct corollary of Lemma~3.19 from \cite{BDKP}.

\begin{lemma}\label{salvad}
Let $(X,d)$ be a compact ultrametric space and let $Y$ be a compact subset of $X$.  
If $(X,d)$ is isometric to some subspace of the space $(Y,d|_{Y \times Y})$,  
then the equality $X = Y$ holds. 
\end{lemma}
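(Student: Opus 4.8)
The plan is to reduce the statement to the classical fact that every distance-preserving self-map of a compact metric space is surjective, and then to chase the inclusions.

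First I would unwind the hypothesis. Saying that $(X,d)$ is isometric to some subspace of $(Y, d|_{Y \times Y})$ means that there is a subset $Z \subseteq Y$ and an isometry $\psi \colon X \to Z$, where $Z$ is equipped with the restriction of $d$. Composing $\psi$ with the natural inclusions $Z \hookrightarrow Y \hookrightarrow X$ gives a map $\Phi \colon X \to X$ which is distance-preserving, $d(\Phi(x),\Phi(y)) = d(x,y)$, and whose image is exactly $Z$; here one only uses that the inclusions preserve distances and that $Z \subseteq Y \subseteq X$. In particular $\Phi$ is injective and continuous. If we knew that $\Phi$ is onto $X$, we would get $Z = X$, and then the chain $X = Z \subseteq Y \subseteq X$ would force $Y = X$, which is the desired conclusion.

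Second, and this is the step I expect to be the only nontrivial point, I would establish (or cite, since the paper attributes the lemma to Lemma~3.19 of \cite{BDKP}) that a distance-preserving map $\Phi$ of a compact ultrametric space $(X,d)$ into itself is surjective. A short self-contained argument: the image $\Phi(X)$ is compact, hence a closed subset of $X$. If there were a point $a \in X \setminus \Phi(X)$, then $\delta := \inf\{ d(a,z) : z \in \Phi(X)\} > 0$ because $\Phi(X)$ is closed, and for all integers $m > n \geq 0$ we would have $d(\Phi^m(a), \Phi^n(a)) = d(\Phi^{m-n}(a), a) \geq \delta$, since $\Phi$ preserves distances and $\Phi^{m-n}(a) \in \Phi(X)$. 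Thus the orbit $(\Phi^k(a))_{k \geq 0}$ would be a sequence of distinct points with pairwise distances at least $\delta$, hence with no convergent subsequence, contradicting compactness via Proposition~\ref{graa}. (In the ultrametric setting one could instead argue with the ball structure, but the metric argument above is the most economical.)

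Finally, applying this fact to the map $\Phi$ constructed in the first step yields $Z = \Phi(X) = X$, and therefore $Y = X$. Note that the hypothesis that $Y$ is compact is not actually needed beyond what is already guaranteed by the compactness of $X$; the essential ingredients are the compactness of $X$ and the existence of an isometric embedding of $X$ into the subset $Y$.
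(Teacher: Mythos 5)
Your argument is correct. Note that the paper does not actually prove this lemma; it simply declares it to be a direct corollary of Lemma~3.19 of \cite{BDKP}, so your proposal supplies a self-contained proof where the paper offers only a citation. The substance of your argument is the classical fact that a distance-preserving self-map of a compact metric space is surjective, proved by the standard orbit argument: if $a\notin\Phi(X)$ then $\delta=\operatorname{dist}(a,\Phi(X))>0$ because $\Phi(X)$ is compact, hence closed, and the iterates $\Phi^{k}(a)$ are pairwise at distance at least $\delta$, contradicting Proposition~\ref{graa}. This is exactly the content underlying the cited lemma, and your reduction step (building $\Phi\colon X\to X$ from $\psi$ and the inclusions $Z\hookrightarrow Y\hookrightarrow X$, then reading off $X=Z\subseteq Y\subseteq X$) is the natural way to deduce the stated form. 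Your closing observation is also accurate: compactness of $Y$ is never used, since $Z=\Phi(X)$ is closed already as a continuous image of the compact space $X$; only compactness of $X$ matters. The only cosmetic point is that nothing ultrametric is used anywhere, which is consistent with the lemma being a purely metric statement.
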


The next theorem claims that every  compact $\bf US$-space is, up to isometry, the completion of an ultrametric space generated by a labeled ray.

Let $R = (x_1, x_2, \dots)$ be a ray with labeling $l^*: V(R) \to \mathbb{R}^+$.  
We say that $l^*$ is a {\it decreasing labeling} if  the sequence
$
    (l^*(x_n))_{n \in \mathbb{N}}
$
is decreasing.

\begin{theorem}\label{sofas}
     Let $(X, d)$ be an infinite ultrametric space. Then the following statements are equivalent:
\begin{enumerate}[label=\normalfont(\roman*), left=0pt] 
    \item $(X, d)$ is a compact $\bf US$-space.
    
    \item $(X, d)$ is the completion of totally bounded $X_0 \subsetneq  X$ generated by labeled ray with decreasing labeling.
\end{enumerate}

\end{theorem}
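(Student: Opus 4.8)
The plan is to prove $(i)\Leftrightarrow(ii)$ by treating the two implications separately, using Theorem~\ref{wop} for the forward direction and an explicit description of the completion of a decreasingly labeled ray for the reverse direction.

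For $(i)\Rightarrow(ii)$ I would argue as follows. Let $(X,d)$ be an infinite compact $\mathbf{US}$-space generated by a labeled star graph $S(l)$ with center $c$, and set $X_0:=X\setminus\{c\}$ and $d_0:=d|_{X_0\times X_0}$; this $X_0$ is an infinite proper subset of $X$. By Theorem~\ref{wop}, $(X_0,d_0)$ is generated by a labeled ray $R(l^*)$, $R=(x_1,x_2,\dots)$, with $l^*(x_n)\ge l^*(x_{n+1})>0$ and $l^*(x_n)\to 0$, hence $l^*$ is a decreasing labeling. As a subset of the compact, hence totally bounded, space $X$, the space $X_0$ is totally bounded, and $(X,d)$ is complete. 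To see that $X_0$ is dense in $X$ it suffices to check $c\in\overline{X_0}$, and this is immediate from Theorem~\ref{qyuiz}: since $l(c)=0$ we get $d(c,x_n)=\max\{l(c),l(x_n)\}=l(x_n)$, and finiteness of every $A_\varepsilon$ together with the $x_n$ being pairwise distinct forces $l(x_n)\to 0$. Thus $(X,d)$ is the completion of the totally bounded $(X_0,d_0)$, which is generated by a labeled ray with decreasing labeling, so $(ii)$ holds.

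For $(ii)\Rightarrow(i)$ the argument is more hands-on. Assume $(X,d)$ is the completion of a totally bounded $X_0\subsetneq X$ with $(X_0,d|_{X_0\times X_0})$ generated by a decreasingly labeled ray $R(l^*)$, $R=(x_1,x_2,\dots)$. Then $(X,d)$ is compact by Proposition~\ref{qru}. Since $d|_{X_0\times X_0}=d_{l^*}$ is an ultrametric, Theorem~\ref{t1.4} applied to the edges $\{x_n,x_{n+1}\}$ together with monotonicity of $(l^*(x_n))_{n\in\mathbb N}$ gives $l^*(x_n)>0$ for all $n$, and for $n<m$ the unique path in $R$ yields $d(x_n,x_m)=\max_{n\le i\le m}l^*(x_i)=l^*(x_n)$, that is, $d(x_n,x_m)=l^*(x_{\min(n,m)})$ for all $n\ne m$. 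Total boundedness of $X_0$ and Lemma~\ref{erra1} make $(x_n)_{n\in\mathbb N}$ a Cauchy sequence, so $l^*(x_n)\to 0$; being a Cauchy sequence of distinct points it is not eventually constant, so it converges in the complete space $X$ to some $c\notin X_0$, and continuity of the metric gives $d(c,x_n)=\lim_k d(x_k,x_n)=l^*(x_n)$. The crucial step is then to prove that $X=X_0\cup\{c\}$: since $X_0$ is dense, any $y\in X\setminus X_0$ is the limit of a Cauchy sequence $(x_{n_j})_{j\in\mathbb N}$ from $X_0$, and the formula $d(x_n,x_m)=l^*(x_{\min(n,m)})$ (with $l^*$ positive and non-increasing) forces $(x_{n_j})_{j\in\mathbb N}$ to be either eventually constant, which is impossible as it would place $y$ in $X_0$, or to satisfy $n_j\to\infty$, in which case $d(y,c)=\lim_j l^*(x_{n_j})=0$ and $y=c$. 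With $X=X_0\cup\{c\}$ in hand, I would finish by checking that $x_0:=c$ satisfies condition~(ii) of Theorem~\ref{xz}: for distinct $x_n,x_m$ one has $d(c,x_n)=l^*(x_n)\le l^*(x_{\min(n,m)})=d(x_m,x_n)$ because $\min(n,m)\le n$ and $l^*$ is non-increasing, while the case $y=c$ is trivial; hence $(X,d)\in\mathbf{US}$, and being compact it is a compact $\mathbf{US}$-space, so $(i)$ holds.

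I expect the main obstacle to be exactly the identification $X=X_0\cup\{c\}$ in the reverse direction: one must rule out the completion introducing any point beyond the single limit $c$ of the ray, and this rests on the elementary but slightly fiddly classification of Cauchy sequences in a decreasingly labeled ray via the explicit distance formula $d(x_n,x_m)=l^*(x_{\min(n,m)})$. The remaining ingredients are routine applications of Theorems~\ref{qyuiz} and~\ref{wop}, Propositions~\ref{graa} and~\ref{qru}, and Lemma~\ref{erra1}.
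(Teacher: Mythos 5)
Your proposal is correct, and its overall architecture matches the paper's: the forward direction via Theorem~\ref{wop} plus total boundedness and completeness of the compact space, and the reverse direction via Proposition~\ref{qru} for compactness followed by a verification of condition (ii) of Theorem~\ref{xz} at the limit point of the ray, using the distance formula $d(x_n,x_m)=l^*(x_{\min\{n,m\}})$ and continuity of the distance. The one place where you genuinely diverge is the identification $X=V(R)\cup\{c\}$. The paper gets this abstractly: it observes that $V(R)\cup\{x_0\}$ is a compact subset of $X$ containing $V(R)$ densely, so that $X$ (being a completion of $V(R)$) embeds isometrically into it, and then invokes the minimality result Lemma~\ref{salvad} to force equality. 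You instead classify the Cauchy sequences of the decreasingly labeled ray directly: any non--eventually-constant Cauchy sequence $(x_{n_j})$ must have $n_j\to\infty$ (since otherwise some index $N$ recurs and the distances stay $\geq l^*(x_N)>0$), hence converges to the single limit $c$, so the completion adds exactly one point. Your route is more elementary and self-contained --- it avoids Lemma~\ref{salvad} and the slightly glossed-over step of producing the isometric embedding of $X$ into $V(R)\cup\{x_0\}$ --- at the cost of the ``fiddly'' case analysis you yourself flag. Two small points you should make explicit in a write-up: the limit $c$ lies outside $X_0$ because $d(x_n,x_N)=l^*(x_N)>0$ for all $n>N$ (distinctness of the points alone does not rule out convergence to a point of the set), and the evaluation $d(y,c)=0$ is cleanest via the strong triangle inequality $d(y,c)\leq\max\{d(y,x_{n_j}),d(x_{n_j},c)\}$ with both terms tending to $0$.
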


\begin{proof}

(i) $\Rightarrow$ (ii).  
Let $(X, d)$ be a compact {\bf US}-space, generated by labeled star graph $S(l)$ with a center $c$.  

As in Theorem \ref{tghjnk1} we write  
\begin{equation}
\label{thhjqqq}
    X_0 :=X \setminus \{ c \}\,\, {\rm and} \,\, d_0 := d|_{X_0 \times X_0}.
\end{equation} 

By Theorem \ref{tghjnk1} the space $(X_0, d_0)$ is generated by labeled ray $R(l^*)$,   
$
    R = (x_1, x_2, \ldots, x_n, \dots)
$
such that  
$
    l^*: V(R) \to \mathbb{R}^+
$
is decreasing and  
\begin{equation}
    \lim\limits_{n \to \infty} 
    l^*(x_n) = 0,
\end{equation}  
where  
\begin{equation}
\label{aqasw}
    l^*(x_n) = d(c, x_n)
\end{equation}  
for every $n \in \mathbb{N}$. Using \eqref{thhjqqq}--\eqref{aqasw} we see that $X_0$ is a dense subset of $(X, d)$.  
Moreover, $(X_0, d_0)$ is totally bounded as a subspace of compact space $(X, d)$.  
Since every compact ultrametric space is complete,  $(X, d)$ is a completion of totally bounded space $X_0  \subsetneq X$ generated by $R(l^*)$.  

(ii) $\Rightarrow$ (i).  
Let $(X, d)$ be the completion of  totally bounded $X_0\subsetneq X$ generated by labeled ray $R(l^*)$, $
    R = (x_1, x_2, \dots)
$
with decreasing labeling 
$
    l^*: V(R) \to \mathbb{R}^+.
$

Then the space $(X,d)$ is compact  by Proposition \ref{qru}. Hence to complete the proof it suffices to show that 
\begin{equation}
\label{poooo}
(X,d) \in {\bf US}.
\end{equation}
Proposition \ref{erra} and Lemma \ref{erra1} imply that $(x_n)_{n \in \mathbb{N}}$ is a Cauchy sequence in $(X,d)$.  
Since every Cauchy sequence of points of compact ultrametric space is a convergent sequence in this space,  
$(x_n)_{n \in \mathbb{N}}$ is a convergent sequence in $(X,d)$.

Let $x_0 \in X$ be a limit point of the sequence $(x_n)_{n \in \mathbb{N}}$, 
\begin{equation}
    \label{algs}
\lim\limits_{n\to\infty}x_n=x_0.
\end{equation}

 Then  the set $V(R) \cup \{x_0\}$ is a compact subset of $(X,d)$. 
Using Lemma \ref{salvad} we obtain the equality
\begin{equation}
\label{casy}
X = V(R) \cup \{x_0\}
\end{equation}
because $(X,d)$ is a compact space, and $V(R) \cup \{x_0\}$ is compact subset of $(X,d)$, and, by Definition \ref{ssmmcc}, $(X,d)$ is isometric to compact subset of the compact set $V(R) \cup \{x_0\}$.

Since the labeling $l^*:V(R) \to \mathbb{R}^+$ is decreasing, formula \eqref{e1.1} with $T(l) = R(l^*)$ gives us
\begin{equation}
\label{ymnvee}
d(x_n, x_m) = d_{l^*}(x_n, x_m) = \max \{ l^*(x_n), l^*(x_m) \} = l^*(x_{\min\{m,n\}})
\end{equation}
for all distinct $m,n \in \mathbb{N}$.

For every $p \in X$ the function 
\begin{equation*}
    X \ni x \mapsto d(x,p) \in \mathbb{R}^+
\end{equation*}
is a continuous mapping from $(X,d)$ to $\mathbb{R}^+$. Hence \eqref{algs} implies
\begin{equation}
\label{qweepmn}
\lim_{n \to \infty} d(x_n, p) = d(x_0, p)
\end{equation}
for each $p \in X$. In particular, using \eqref{ymnvee} with $m = n_1$ and \eqref{qweepmn} with $p = x_{n_1}$ we obtain
\begin{align*}
d(x_0, x_{n_1}) =& \lim_{n \to \infty} d(x_n, x_{n_1})\\
=& \lim_{n \to \infty} l^*(x_{\min\{n_1,n\}})=l^*(x_{n_1})\\
\leq&\max \{ l^*(x_{n_1}), l(x_{n}) \} =d(x_{n_1}, x_{n}),
\end{align*}
wherever $n \in \mathbb{N}$ and $n \neq n_{1}$. Thus the inequality
\begin{equation}
\label{4tru}
d(x_0, x) \leq d(y, x)
\end{equation}
holds whenever $x, y \in V(R) \cup \{x_{0}\}$ and $x_0 \neq x \neq y$.

Membership \eqref{poooo} follows from \eqref{casy} and \eqref{4tru} by Theorem \ref{xz}.

The proof is completed.
    
\end{proof}

\begin{example}
    \label{fgjkdfb}
Let us define an ultrametric $d^+\colon \mathbb{R}^+ \times \mathbb{R}^+ \to \mathbb{R}^+$ as
\begin{equation*}
d^+(p, q) =
\begin{cases}
0, & \text{if } p = q, \\
\max \{p, q\}, & \text{if } p \neq q.
\end{cases}
\end{equation*}
In \cite{Dov-Rov} it was noted  that $({\mathbb R}^+,d^+)$ is an ${\bf US}$-space.

\end{example}

Using Theorem \ref{wop} we can prove that an infinite subset \( A \) of \( \mathbb{R}^+ \) is compact subset of \( (\mathbb{R}^+, d^+) \) iff 
$A=\{t_n:n\in\mathbb{N}\}\cup\{0\}$ where \( (t_n)_{n \in \mathbb{N}} \)
 is a strictly decreasing sequence \( (t_n)_{n \in \mathbb{N}} \) of points of \( \mathbb{R}^+ \) such that the limit relation
\begin{equation*}
\lim_{n \to \infty} t_n = 0
\end{equation*}
holds
in the usual Euclidean topology.

\begin{remark}
The ultrametric $d^+$ on $\mathbb{R}^+$ was introduced by Delhomme, Laflamme, Pouzet, and Sauer in \cite{Delhommé}. 
In \cite{Ishiki} Yoshito Ishiki wrote:
"The space $ (\mathbb{R}^+, d^+) $ is as significant for ultrametric spaces as the space $ \mathbb{R}^+$ or $ \mathbb{R} $ with the Euclidean topology in the theory of usual metric spaces."
Some results related to the ultrametric space $ (\mathbb{R}^+, d^+) $ can be found in \cite{Dobrush,Dovgoshey-Kostikov-2024,Ishiki2023,Ishiki2021}.
\end{remark}

\section{Two conjectures}

We believe that the following hypothesis is correct.

\begin{conjecture}\label{mnb}
Let $(X,d)$ be an infinite ultrametric space. Then the following statements are equivalent:

\begin{enumerate}[label=\normalfont(\roman*), left=0pt]
    \item There is $(X^*,d^*) \in {\bf US}$ such that $(X,d)$ is isometric to a subspace of $(X^*,d^*)$.
    
    \item $(X,d)$ contains no four-point subspace which is weakly similar to $(X_4,d_4)$ or to $(Y_4,\rho_4)$.
\end{enumerate}

\end{conjecture}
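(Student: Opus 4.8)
The plan is to replace statement~(ii) by a single four-point inequality and then to read the structure of $(X,d)$ off it. For an ultrametric space $(X,d)$ I would first show that (ii) is equivalent to the condition
\begin{equation*}
d(x,y)\le\max\{d(x,z),\,d(y,w)\}\qquad\text{for all pairwise distinct }x,y,z,w\in X.\tag{$\star$}
\end{equation*}
If $(\star)$ fails for some quadruple $x,y,z,w$, then iterating the strong triangle inequality exactly as in the proof of Theorem~\ref{eeerlm} forces $d(x,y)=d(x,w)=d(z,y)=d(z,w)>\max\{d(x,z),d(y,w)\}$, and hence $\{x,y,z,w\}$ is weakly similar to $(X_4,d_4)$ when $d(x,z)\neq d(y,w)$ and to $(Y_4,\rho_4)$ when $d(x,z)=d(y,w)$; so (ii) fails. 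Conversely, a strictly increasing bijection between distance sets commutes with $\max$ and preserves strict inequalities, so it carries a violation of $(\star)$ to a violation of $(\star)$; since $(X_4,d_4)$ and $(Y_4,\rho_4)$ both violate $(\star)$, no four-point subspace of a space satisfying $(\star)$ can be weakly similar to them. Thus (ii) $\Leftrightarrow$ $(\star)$.

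For (i) $\Rightarrow$ (ii) I would argue by contradiction. If $(X,d)$ embeds isometrically into some $(X^{*},d^{*})\in{\bf US}$ and contains a four-point subspace $W$ weakly similar to $(X_4,d_4)$ or $(Y_4,\rho_4)$, then $W$ is isometric to a four-point subspace of $(X^{*},d^{*})$, hence $W\in{\bf US}$ by Proposition~\ref{8866gh}. The characterising condition of Theorem~\ref{xz} is visibly preserved by weak similarities, so ${\bf US}$ is a weak-similarity invariant; therefore $(X_4,d_4)\in{\bf US}$ or $(Y_4,\rho_4)\in{\bf US}$, contradicting Figure~\ref{cis}. Hence (ii) holds.

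The substantial direction is (ii) $\Rightarrow$ (i). Assuming $(\star)$, set $\mu(x):=\inf_{z\in X\setminus\{x\}}d(x,z)$ for $x\in X$ (well defined since $X$ is infinite). Because $\mu(x)\le d(x,y)$ and $\mu(y)\le d(x,y)$ always hold, a short argument shows $(\star)$ is equivalent to the requirement that
\begin{equation*}
d(x,y)=\max\{\mu(x),\,\mu(y)\}\qquad\text{for all distinct }x,y\in X.
\end{equation*}
Indeed, if $\max\{\mu(x),\mu(y)\}<d(x,y)$ one can pick $z\neq x$ and $w\neq y$ with $d(x,z)<d(x,y)$ and $d(y,w)<d(x,y)$, observe that $x,y,z,w$ are then automatically pairwise distinct, and contradict $(\star)$; the reverse implication follows from $\max\{d(x,z),d(y,w)\}=\max\{\mu(x),\mu(y),\mu(z),\mu(w)\}\ge\max\{\mu(x),\mu(y)\}=d(x,y)$. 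Next I would note that $\mu(x)=0$ exactly when $x$ is an accumulation point of $X$, and that the displayed identity allows \emph{at most one} such point (two of them would lie at distance $0$). Two cases then remain. If $\mu(x_{0})=0$ for the unique accumulation point $x_{0}$, then $\mu(x)>0$ for all $x\neq x_{0}$, and the displayed identity says precisely that $(X,d)$ coincides with the ultrametric $d_{l}$ generated by the star graph with centre $x_{0}$ and labeling $l=\mu$; since $\max\{\mu(x_{0}),\mu(x)\}=\mu(x)>0$ on every edge $\{x_{0},x\}$, this is legitimate by Theorem~\ref{t1.4}, so $(X,d)\in{\bf US}$ and (i) holds with $X^{*}=X$. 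If instead $\mu(x)>0$ for every $x\in X$, I adjoin a fresh point $\omega$, set $X^{*}:=X\cup\{\omega\}$, and define $d^{*}|_{X\times X}:=d$, $d^{*}(\omega,x):=\mu(x)$; one checks that $d^{*}=d_{l}$ for the star graph with centre $\omega$ and labeling $l(\omega):=0$, $l|_{X}:=\mu$, and since $\max\{l(\omega),l(x)\}=\mu(x)>0$ on every edge, Theorem~\ref{t1.4} gives that $d^{*}$ is an ultrametric. Hence $(X^{*},d^{*})\in{\bf US}$ contains $(X,d)$ isometrically, and (i) holds.

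The only step going genuinely beyond Theorem~\ref{eeerlm}, and the one I expect to require the most care, is the second case above: when $(X,d)$ is discrete but $\mu$ does not attain its infimum — as happens for $(0,1)\subset(\mathbb{R}^{+},d^{+})$ with the ultrametric of Example~\ref{fgjkdfb} — no centre exists inside $X$, so one must adjoin $\omega$ and recheck the ultrametric axioms for the enlarged space. That recheck is immediate from Theorem~\ref{t1.4}, but it is precisely here that the formulation of (i) via isometric subspaces, rather than membership in ${\bf US}$, becomes indispensable.
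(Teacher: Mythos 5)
The paper does not prove this statement: it is Conjecture~\ref{mnb}, explicitly left open. The authors only record that the compact case follows from Theorem~\ref{siti} and that (i)~$\Rightarrow$~(ii) follows from Proposition~\ref{8866gh}, noting that (ii)~$\Rightarrow$~(i) remains to be shown for non-compact spaces. So there is nothing in the paper to compare your argument against; it has to be judged on its own.

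Having checked it step by step, I believe your argument is correct and actually settles the conjecture. The equivalence of (ii) with the four-point inequality $(\star)$ is exactly the computation inside the proof of Theorem~\ref{eeerlm} (a violation of $(\star)$ forces, via the strong triangle inequality, four equal ``large'' distances and two smaller ones on disjoint pairs, which is the $(X_4,d_4)$/$(Y_4,\rho_4)$ pattern, and weak similarities preserve the order of distances). The step from $(\star)$ to the identity $d(x,y)=\max\{\mu(x),\mu(y)\}$ is sound: if both $\mu(x)<d(x,y)$ and $\mu(y)<d(x,y)$, the witnesses $z,w$ are automatically distinct from $x,y$ and from each other (if $z=w$ the strong triangle inequality already gives a contradiction), so $(\star)$ fails. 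The genuinely new ingredient relative to Theorem~\ref{eeerlm} is the second case: when $\mu>0$ everywhere, no center need exist inside $X$ (the hypothesis ``finite or has a limit point'' in Theorem~\ref{eeerlm} is precisely what guarantees an internal center there), and your adjunction of an external vertex $\omega$ with $l(\omega)=0$, $l|_{X}=\mu$ produces a labeled star graph whose generated ultrametric restricts to $d$ on $X$; Theorem~\ref{t1.4} applies since every edge $\{\omega,x\}$ satisfies $\max\{l(\omega),l(x)\}=\mu(x)>0$, and Theorem~\ref{xz} confirms membership in ${\bf US}$. This is exactly where the weaker conclusion of (i) --- isometric embeddability rather than membership --- is used, as you observe. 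Two cosmetic points: $\mu$ is well defined as soon as $\operatorname{card}X\ge 2$ (infinitude is not the relevant hypothesis), and the remark identifying $\mu(x)=0$ with $x$ being an accumulation point is not needed for the dichotomy --- all you use is that at most one point can have $\mu=0$. I would encourage you to write this up in full detail, since if it withstands scrutiny it resolves an open question of the paper.
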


We note that Conjecture \ref{mnb} is true by Theorem~\ref{siti} if $(X,d)$ is compact. Moreover, using Proposition~\ref{8866gh}, it is easy to prove the validity of implication (i) $\Rightarrow$ (ii) for arbitrary infinite $(X,d)$. Thus, to prove Conjecture~\ref{mnb} it suffices to show that (ii) $\Rightarrow$ (i) is valid for non-compact ultrametric spaces $(X,d)$.

The next conjecture gives us a partial generalization of Theorem~\ref{sofas}.

Recall that a tree \( T \) is {\it starlike} if it has exactly one vertex with degree greater than 2. (See, for example, \cite{Bu-Zhou,Lepovic-Gutman}).

\begin{conjecture}
    \label{jak2}
Let $(X,d)$ be an infinite ultrametric space. Then the following statements are equivalent.

\begin{enumerate}[label=\normalfont(\roman*), left=0pt] 
    \item $(X,d)$ is the completion of totally bounded $X_0  \subsetneq X$ generated by a labeled ray.
    
    \item There is a starlike rayless tree $T$ with a labeling $l: V(T) \to \mathbb{R}^+$ such that 
   $(X,d)$ is a compact ultrametric space generated by $T(l)$.
\end{enumerate}

\end{conjecture}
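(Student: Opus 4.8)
\textbf{Proof proposal for Theorem~\ref{sofas}.}

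The plan is to reduce both implications to Theorem~\ref{tghjnk1} (Conjecture~\ref{tghjnk}), which already tells us precisely when an infinite ${\bf US}$-space is compact in terms of the ray structure on $X\setminus\{c\}$. For the implication (i)~$\Rightarrow$~(ii), I would start from a compact ${\bf US}$-space $(X,d)$, pick a labeled star graph $S(l)$ with center $c$ that generates it, and set $X_0 := X\setminus\{c\}$, $d_0 := d|_{X_0\times X_0}$. Theorem~\ref{tghjnk1} hands us a labeled ray $R(l^*)$ with $R=(x_1,x_2,\ldots)$, decreasing labeling $l^*$, $\lim_{n\to\infty}l^*(x_n)=0$, and $l^*(x_n)=d(c,x_n)$, which generates $(X_0,d_0)$. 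The only things left to check are that $X_0$ is dense in $X$ (this follows because $\lim_{n\to\infty}d(c,x_n)=\lim_{n\to\infty}l^*(x_n)=0$ shows $c\in\overline{X_0}$, and $X_0$ already contains every other point) and that $(X_0,d_0)$ is totally bounded (immediate: a subspace of a compact ultrametric space is totally bounded, e.g.\ by Proposition~\ref{erra}). Since compact ultrametric spaces are complete, $(X,d)$ is indeed a completion of the totally bounded space $X_0\subsetneq X$ generated by $R(l^*)$.

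For (ii)~$\Rightarrow$~(i), suppose $(X,d)$ is the completion of a totally bounded $X_0\subsetneq X$ generated by a labeled ray $R(l^*)$, $R=(x_1,x_2,\ldots)$, with decreasing labeling $l^*$. Compactness of $(X,d)$ is immediate from Proposition~\ref{qru}, so the real content is establishing $(X,d)\in{\bf US}$. The key steps are: (a) use Proposition~\ref{erra} together with Lemma~\ref{erra1} to see that $(x_n)_{n\in\mathbb N}$ is a Cauchy sequence in $(X,d)$, hence convergent (since $(X,d)$ is compact, thus complete) to some $x_0\in X$; (b) observe that $V(R)\cup\{x_0\}$ is then a compact subset of $(X,d)$, while $(X,d)$ itself is a completion of $X_0$ sitting isometrically inside this set, so Lemma~\ref{salvad} forces $X = V(R)\cup\{x_0\}$; (c) compute distances: since $l^*$ is decreasing, formula \eqref{e1.1} gives $d(x_n,x_m)=l^*(x_{\min\{m,n\}})$ for distinct $m,n$, and by continuity of $x\mapsto d(x,x_{n_1})$ we get $d(x_0,x_{n_1})=\lim_{n\to\infty}l^*(x_{\min\{n_1,n\}})=l^*(x_{n_1})\le d(x_{n_1},x_n)$ for $n\ne n_1$; (d) conclude that $x_0$ plays the role of the distinguished point in Theorem~\ref{xz}, namely $d(x_0,x)\le d(y,x)$ whenever $x_0\ne x\ne y$, so $(X,d)\in{\bf US}$.

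I expect the main obstacle to be step (b): pinning down that the limit point $x_0$ genuinely \emph{is} the missing point, i.e.\ that $X$ contains no further points beyond $V(R)\cup\{x_0\}$. A priori the completion of a totally bounded space generated by a ray could add more than one limit point; what rules this out is that $V(R)\cup\{x_0\}$ is already compact and already contains an isometric copy of the dense set $X_0$, so by Lemma~\ref{salvad} (a rigidity statement: a compact ultrametric space isometric to a subspace of a compact subset of itself must equal that subset) we get $X=V(R)\cup\{x_0\}$. Once this is in hand, steps (c) and (d) are routine computations with the strong triangle inequality and continuity of the metric, and the appeal to Theorem~\ref{xz} finishes the argument. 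A minor subtlety worth double-checking is that $l^*$ being decreasing (not necessarily \emph{strictly}) is enough throughout; the definition of decreasing labeling in the excerpt is exactly the non-strict one, and $\max\{l^*(x_n),l^*(x_m)\}=l^*(x_{\min\{m,n\}})$ holds in that generality, so no problem arises.
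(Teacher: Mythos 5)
Your proposal does not address the statement at hand: it is a proof of Theorem~\ref{sofas}, whereas the statement under review is Conjecture~\ref{jak2}, which the paper deliberately leaves open as a \emph{partial generalization} of Theorem~\ref{sofas}. The two differ in both clauses. In statement (i) of the conjecture the labeled ray carries an \emph{arbitrary} labeling, not a decreasing one; in statement (ii) the generating tree is an arbitrary \emph{starlike rayless} tree, and the paper stresses (Figure~\ref{unik}) that star graphs form a proper subclass of these. Your argument is anchored precisely on the hypotheses you no longer have: Theorem~\ref{tghjnk1} applies only to ${\bf US}$-spaces, i.e.\ to spaces generated by labeled \emph{star graphs}, so it says nothing about a compact space generated by a starlike rayless tree such as $T_1$ or $T_2$ of Figure~\ref{unik}; and the identity $d(x_n,x_m)=\max\{l^*(x_n),l^*(x_m)\}=l^*(x_{\min\{m,n\}})$, on which your steps (c) and (d) rest, is exactly where the decreasing-labeling hypothesis enters. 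For a general labeling, \eqref{e1.1} gives $d(x_n,x_m)=\max_{\min\{m,n\}\le i\le \max\{m,n\}} l^*(x_i)$, the distinguished-point condition of Theorem~\ref{xz} can fail, and the completion need not be a ${\bf US}$-space at all --- which is precisely why the conjecture replaces star graphs by starlike rayless trees.

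What would actually be needed is new content in both directions. For (i)~$\Rightarrow$~(ii) of the conjecture you can still argue, via Proposition~\ref{erra} and Lemma~\ref{erra1}, that $(x_n)_{n\in\mathbb N}$ is Cauchy, and via Lemma~\ref{salvad} that the completion adds a single point $x_0$; but you must then \emph{construct} a starlike rayless labeled tree generating $V(R)\cup\{x_0\}$ with the ultrametric determined by \eqref{e1.1} for the ray, which requires grouping the vertices $x_n$ into finite branches governed by the local maxima of $l^*$ --- nothing in your proposal does this. For (ii)~$\Rightarrow$~(i) you must show that the vertex set of a compact space generated by a starlike rayless tree can be re-enumerated as a ray whose induced ultrametric is the restriction of $d$; again Theorem~\ref{tghjnk1} is unavailable because such a space need not be ${\bf US}$. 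As written, your argument correctly reproduces the paper's proof of Theorem~\ref{sofas} but leaves Conjecture~\ref{jak2} untouched; note that the paper itself offers no proof of this conjecture.
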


If $T$ is a star graph, then $T$ is starlike and rayless, but not vice versa, see, for example, Figure~\ref{unik} below.

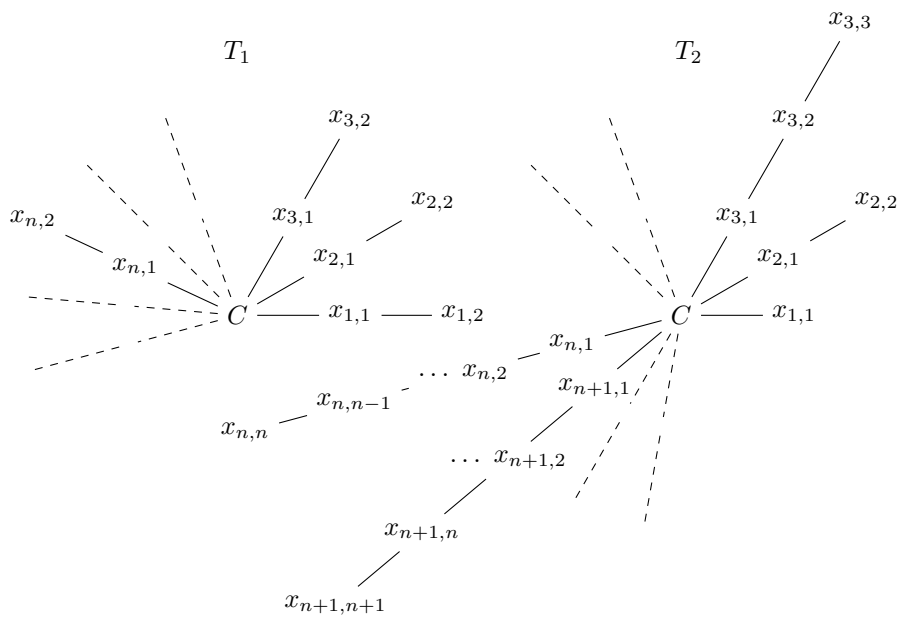
\begin{figure}[H]
    \centering
\begin{tikzpicture}

    \node (C1) at (-2,0) {$C$}
        child[grow=0] {node {$x_{1,1}$}
            child[grow=0] {node {$x_{1,2}$}}
        } 
        child[grow=30] {node {$x_{2,1}$}
            child[grow=30] {node {$x_{2,2}$}}
        } 
        child[grow=60] {node {$x_{3,1}$}
            child[grow=60] {node {$x_{3,2}$}}
        }
        child[grow=110, dashed] {node {} child[grow=110, dashed] {node {}}}  
        child[grow=135, dashed] {node {} child[grow=135, dashed] {node {}}}  
        child[grow=155] {node {$x_{n,1}$}
            child[grow=155] {node {$x_{n,2}$}}
        }
        child[grow=175, dashed] {node {} child[grow=175, dashed] {node {}}}  
        child[grow=195, dashed] {node {} child[grow=195, dashed] {node {}}};  
    \node[draw=none] at (-2,3.5) {$T_1$};

    \node (C2) at (3.9,0) {$C$}
        child[grow=0] {node {$x_{1,1}$}} 
        child[grow=30] {node {$x_{2,1}$}
            child[grow=30] {node {$x_{2,2}$}}
        } 
        child[grow=60] {node {$x_{3,1}$}
            child[grow=60] {node {$x_{3,2}$}
                child[grow=60] {node {$x_{3,3}$}}
            }
        }
        child[grow=110, dashed] {node {} child[grow=110, dashed] {node {}}}  
        child[grow=135, dashed] {node {} child[grow=135, dashed] {node {}}}  
        child[grow=195] {node {$x_{n,1}$}
            child[grow=195] {node {$\dots \,\,x_{n,2}$}
                child[grow=195] {node {$x_{n,n-1}$}
                    child[grow=195] {node {$x_{n,n}$}}
                }
            }
        }
        child[grow=220] {node {$x_{n+1,1}$}
            child[grow=220] {node {$\ldots \,\,x_{n+1,2}$}
                child[grow=220] {node {$x_{n+1,n}$}
                    child[grow=220] {node {$x_{n+1,n+1}$}}
                }
            }
        }
        child[grow=240, dashed] {node {} child[grow=240, dashed] {node {}}}  
        child[grow=260, dashed] {node {} child[grow=260, dashed] {node {}}};  
    \node[draw=none] at (4,3.5) {$T_2$};
\end{tikzpicture}

    \caption{Trees $T_1$ and $T_2$ are starlike and rayless.}
    \label{unik}
\end{figure}

\section*{Declarations}

\subsection*{Declaration of competing interest}

 The authors declare no conflict of interest.

\subsection*{Data availability}

  All necessary data are included into the paper.

\subsection*{Acknowledgement}

Oleksiy Dovgoshey was supported by grant $359772$ of the Academy of Finland.\\
Omer Cantor was supported by grant  $721/24$ of the Israeli Science Foundation.


\bigskip

CONTACT INFORMATION

\medskip
Oleksiy Dovgoshey\\
Department of Function Theory, Institute of Applied Mathematics and Mechanics of NASU, Slovyansk, Ukraine,\\
Department of Mathematics and Statistics, University of Turku, Turku, Finland \\
oleksiy.dovgoshey@gmail.com, oleksiy.dovgoshey@utu.fi

\medskip
Omer Cantor\\
Department of Mathematics, University of Haifa, Haifa, Israel\\
ocantor@proton.me

\medskip
Olga Rovenska\\
Department of Mathematics and Modelling, Donbas State Engineering Academy, Kramatorsk, Ukraine\\
rovenskaya.olga.math@gmail.com

\end{document}